\author{
Patrick Dondl
\thanks{Abteilung f{\"u}r Angewandte Mathematik, 
Albert-Ludwigs-Universit{\"a}t Freiburg, 
Hermann-Herder-Str.~10, 
79104 Freiburg, Germany; 
{\tt patrick.dondl@mathematik.uni-freiburg.de} }, 
Martin Jesenko
\thanks{Abteilung f{\"u}r Angewandte Mathematik, 
Albert-Ludwigs-Universit{\"a}t Freiburg, 
Hermann-Herder-Str.~10, 
79104 Freiburg, Germany; 
{\tt martin.jesenko@mathematik.uni-freiburg.de} }, 
Michael Scheutzow
\thanks{
Institut f\"{u}r Mathematik, 
MA 7--5, 
Fakult\"{a}t II,
Technische Universit\"{a}t Berlin, 
Stra{\ss}e des 17.~Juni~136,
10623 Berlin, 
Germany;
{\tt ms@math.tu-berlin.de}
}
}
\title{Pinning of interfaces in a random medium with zero mean}
\newtheorem{theo}{Theorem}[section] 
\newtheorem{cor}[theo]{Corollary}
\newtheorem{prop}[theo]{Proposition}
\newtheorem{con}[theo]{Conclusion}
\theoremstyle{definition}
\newtheorem{remark}[theo]{Remark}
\def\E {\mathbb{E}}
\def\P {\mathbb{P}}
\def\Fcal {\mathcal{F}}
\def\Laplace {\Delta}
\def\R {\mathbb R}
\def\N {\mathbb N}
\def\Z {\mathbb Z}
\def\i {\infty}
\def\* {$*$}
\def\supp{\mathop{\rm supp}}
\def\dist{\mathop{\rm dist}}
\newcommand{\cred}{\color{red}}
\begin{document}

\maketitle

\begin{abstract}
We consider two related models for the propagation of a curvature sensitive interface in a time independent random medium. In both cases we suppose that the medium contains obstacles that act on the propagation of the interface with an inhibitory or an acceleratory force. We show that the interface remains bounded for all times even when a small constant external driving force is applied. This phenomenon has already been known when only inhibitory obstacles are present. In this work we extend this result to the case of---for example---a random medium of random zero mean forcing.

The first model we study is discrete with a random forcing on each lattice site. In this case we construct a supersolution employing a local path optimization procedure. In the second, continuous, model we consider a random heterogenous medium consisting of localized small obstacles of random sign. To construct a stationary supersolution here, we need to pass through sufficiently many blocking obstacles while avoiding any obstacles of the other sign. This is done by employing a custom percolation argument.

\smallskip \noindent
{\bf 2020 Mathematics Subject Classification.} 35R60, 74N20.

\smallskip \noindent
{\bf Key words and phrases.} Quenched Edwards-Wilkinson model, phase boundaries, pinning, random environment, viscosity super-solutions

\end{abstract}

%

%%%%%%%%%%%%%%%%%%%%%%%%%%%%%%%%%%%%%%%%%%%%%%%%%%%%%%%%%%%%%%%%%%%%%%%%%%%%%%%%%%%%%%%%%%%%%%%%%%%%%%%%%%%%%%%%%%%%%%%%%
%%%%%%%%%%%%%%%%%%%%%%%%%%%%%%%%%%%%%%%%%%%%%%%%%%%%%%%%%%%%%%%%%%%%%%%%%%%%%%%%%%%%%%%%%%%%%%%%%%%%%%%%%%%%%%%%%%%%%%%%%
\section{Introduction}
%%%%%%%%%%%%%%%%%%%%%%%%%%%%%%%%%%%%%%%%%%%%%%%%%%%%%%%%%%%%%%%%%%%%%%%%%%%%%%%%%%%%%%%%%%%%%%%%%%%%%%%%%%%%%%%%%%%%%%%%%
%%%%%%%%%%%%%%%%%%%%%%%%%%%%%%%%%%%%%%%%%%%%%%%%%%%%%%%%%%%%%%%%%%%%%%%%%%%%%%%%%%%%%%%%%%%%%%%%%%%%%%%%%%%%%%%%%%%%%%%%%

We are investigating a model for an interface propagating through a random, heterogeneous medium. As a governing equation, we consider
\begin{equation} \label{eq:evolution}
 \partial_{t} u(t,x) = \Laplace u(t,x) - f(x,u(t,x)) + F. 
\end{equation}

This equation arises for example as a linearization of the mean curvature flow with a spatially non-homogeneous driving force (see, e.g., \cite{DirrYip,DDS} for a derivation). The graph of $u$ is the shape of the interface at time $t$, $F$ is a given exterior driving force and $f$ is the force exerted by the medium on the interface. The function $f$ is supposed to be non-zero only in small sets which correspond, for example, to obstacles, impurities or precipitates in the medium. Such a setting is commonly found when studying magnetic domain values, dislocation lines, or charge density waves. For a more detailed list of related problems and applications, we refer to \cite{BrazovskiiNattermann,DirrYip}.

In the deterministic, periodic setting this problem was studied in \cite{DirrYip}, with $f$ having zero mean (or at least this being a sufficient condition for their results). The authors show that in this case, under some non-degeneracy conditions, there exists a critical driving force $F^*$, up to which \emph{pinning} occurs, i.e., the evolving interface stays below some stationary hypersurface for all times. For larger driving forces, there is a non-stationary solution to the problem that propagates with positive average velocity.

Here, we consider a random setting, which can be seen as a variant of the quenched Edwards-Wilkinson model, in the sense that the strengths and positions of obstacles are random and non-correlated on long distances, but time-independent. Furthermore, they may act on the interface in both directions (i.e., $f$ is not assumed to be non-negative). Thus, our model also includes the case with random forces having zero mean, which is a common setting in the physics literature, see, e.g., \cite{BrazovskiiNattermann,Nattermann:97}.

Our goal is to show a result regarding pinning for two models.  In both models we consider only the one dimensional case, so that the interface is in fact a curve, and our equation reads
\[ \partial_{t} u(t,x,\omega) = \partial_{xx} u(t,x,\omega) - f(x,u(t,x,\omega), \omega ) + F \]
where exceptionally we stressed the randomness of the setting given by a probability space $ ( \Omega , {\mathcal{F}} , \mathbb{P} ) $ by writing the random variable $ \omega $.
The interface is initially supposed to be flat having the height 0, i.e.~$ u ( 0, \cdot ) = 0 $. The basic idea is to a.s.~find a (viscosity) supersolution (for the definition and properties see, e.g., \cite{Crandall:1992kn}) to the related stationary problem, i.e., a function $v$ that satisfies
\[ v''(x) - f(x,v(x)) + F \le 0
\quad \mbox{and} \quad
v(x) \ge 0
\quad \mbox{for all } x \in \R.  \] 
By employing an appropriate comparison principle, this immediately implies that the interface a.s. stays below the graph of $v$ for all times since this was the case at $ t = 0 $. Our main goal in this work is thus to show the existence of such a non-negative stationary supersolution in the setting of our two models.
 
The first model we study is spatially purely discrete; we there consider the lattice $\Z^{2}$. Each lattice point acts with a force of random strength chosen by a suitable probability distribution. The notions of the space and time derivative are adapted to the discrete case. This model was studied in, e.g., \cite{Bodineau:2013ur}, where for some very specific distributions of $f$ pinning and depinning results were shown. Here, we focus on results regarding pinning, but consider a large class of distributions (in particular, allowing the aforementioned case of $f$ having zero mean).

For the continuous setting, for the case $f\ge 0$, the occurrence of pinning for sufficiently small driving force was shown in \cite{DDS}. Depinning results for unbounded obstacles were studied, e.g., in \cite{DondlScheutzow:12, DondlScheutzow:17}. To prove pinning results in the case of obstacles without prescribed sign, we follow a similar strategy of localization and percolation as \cite{DDS}, but require a more explicit form of the constructed supersolution. This is possible due to our one-dimensional setting. Moreover, it is necessary to extend the percolation result from \cite{DDGHS} to the case with finite dependence of sites.

%%%%%%%%%%%%%%%%%%%%%%%%%%%%%%%%%%%%%%%%%%%%%%%%%%%%%%%%%%%%%%%%%%%%%%%%%%%%%%%%%%%%%%%%%%%%%%%%%%%%%%%%%%%%%%%%%%%%%%%%%
%%%%%%%%%%%%%%%%%%%%%%%%%%%%%%%%%%%%%%%%%%%%%%%%%%%%%%%%%%%%%%%%%%%%%%%%%%%%%%%%%%%%%%%%%%%%%%%%%%%%%%%%%%%%%%%%%%%%%%%%%
\section{Discrete model on $\Z^2$}
%%%%%%%%%%%%%%%%%%%%%%%%%%%%%%%%%%%%%%%%%%%%%%%%%%%%%%%%%%%%%%%%%%%%%%%%%%%%%%%%%%%%%%%%%%%%%%%%%%%%%%%%%%%%%%%%%%%%%%%%%
%%%%%%%%%%%%%%%%%%%%%%%%%%%%%%%%%%%%%%%%%%%%%%%%%%%%%%%%%%%%%%%%%%%%%%%%%%%%%%%%%%%%%%%%%%%%%%%%%%%%%%%%%%%%%%%%%%%%%%%%%

Let the obstacle strenghts $f(i,j)$, $i,j \in \Z$ be independent and identically distributed $\Z \cup \{-\infty\}$-valued random variables
defined on a probability space $(\Omega,\Fcal,\P)$. We denote the expected value of a (possibly extended real-valued)
random variable on that space by $\E$ (whenever it is defined). 
We consider the following continuous time evolution of (random) functions $u_t: \Z \to \Z$, $t \ge 0$.
The initial condition is $u_0\equiv 0$. The function $u$ can jump from its current value $u_t(i)$ to $u_t(i)+1$
or to  $u_t(i)-1$ depending on the value of $f(i,u_t(i))$ and the discrete Laplacian $\Delta_{1} u_t(i)$ defined as 
\[ \Delta_{1} u(i) = u(i+1) + u(i-1) - 2 u(i). \]
The corresponding  {\em jump rate} is $\lambda=\Lambda\big(\Delta_{1} u_t(i)-f(i,u_t(i))\big)$ with the interpretation
that if $\lambda >0$, then $u$ can only jump to $u_t(i)+1$ (with rate $\lambda$) and when $\lambda <0$,
then $u$ can only jump to $u_t(i)-1$ and does so with rate $-\lambda$. 
Here, $\Lambda$ is a strictly increasing function from $\Z$ to $\R$ which satisfies $\Lambda(0)=0$. The phrase
\emph{ $u$ jumps from $u_t(i)$ to $u_t(i)+1$ with rate $\lambda >0$} means - as usual - that for some 
exponentially distributed random variable $\xi$ with parameter $\lambda$ which is independent of the field  $f$, $u_s(i)=u_t(i)$
for all $s \le \zeta \wedge (t +\xi)$, where $\zeta$ is the first time after $t$ when the Laplacian at $u_t(i)$ changes
(due to a jump of one of the neighbors) and where $u_{t+\xi}(i)=u_t(i)+1$ in case $\xi<\zeta$.

One may ask under which conditions on $\Lambda$ as above there exists a unique process with values in
the functions from $\Z$ to $\Z$ associated to the given rates. This clearly holds when 
$\Lambda$ is bounded which we can safely assume since none of the following results in this section depends on
$\Lambda$ (except for its sign).

%\todo[inline]{We should add a sentence explaining what we are about to do [depends on what we say in the introduction]}

%\todo[inline]{In particular we should say why the existence of a non-negative supersolution is of interest}

Let $Z, Z_0, Z_1,...$ be independent  random variables which have the same distribution as $f(i,j)$.

\begin{theo}
  If
  $$
  \E  \big(Z_0 \vee (-1+Z_1) \vee (-2+Z_2)\vee ...\big)>0,
$$
then, almost surely, there exists a function  $v :\Z \to \N_0$ such that $\Delta_{1} v(i) \le f(i,v(i))$, i.e., a
non-negative supersolution.
\end{theo}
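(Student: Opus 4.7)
The plan is to construct a non-negative integer-valued supersolution $v$ explicitly via a local path-optimization that exploits the i.i.d.\ structure of the obstacle field. For each column $i$, introduce the random variable $W_i := \max_{k \ge 0}(f(i,k) - k)$ together with a corresponding maximizer $K_i \ge 0$; since the $f(i,k)$ are i.i.d., the $W_i$ are i.i.d.\ copies of $Z_0 \vee (-1+Z_1) \vee (-2+Z_2) \vee \ldots$, and $\E W_i > 0$ by hypothesis. The quantity $W_i$ is the ``reward'' at column $i$ for climbing to its optimal height $K_i$: one pays $K_i$ units in height and gains $f(i,K_i) = K_i + W_i$ in obstacle strength. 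Positivity of $\E W_i$ is the positive-drift condition that will fuel the construction.

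First I would rewrite the supersolution inequality in slope variables $s_i := v(i) - v(i-1)$: the condition $\Delta_{1} v(i) \le f(i, v(i))$ becomes $s_{i+1} \le s_i + f(i, v(i))$, and non-negativity $v(i+1) \ge 0$ becomes $s_{i+1} \ge -v(i)$. Extending $v$ to column $i+1$ therefore requires that the admissibility interval $[-v(i),\, s_i + f(i, v(i))]$ be non-empty, equivalently $U_i := 2v(i) - v(i-1) + f(i, v(i)) \ge 0$. The plan is to define $v$ greedily by a ``local path optimization'' policy: at each column, climb towards the optimal height $K_i$ whenever the forthcoming columns would otherwise violate $U \ge 0$, and descend with slope $-1$ when no climbing is needed, always staying inside the admissibility interval.

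Concretely, I would (i) fix an initial configuration at some favorable column $i_0$ (one with both $f(i_0,0)\ge0$ and a large neighboring $W$-value to serve as a regeneration point); (ii) propagate to the right using the greedy policy and show, via the strong law of large numbers applied to the i.i.d.\ rewards $W_j$, that the accumulated slack $\sum_{j=i_0}^{i}W_j$ grows linearly in $i-i_0$; (iii) conclude by a renewal argument that $v$ almost surely never crashes into the forbidden region $\{U_i<0\}$, so the construction can be continued indefinitely; (iv) mirror the procedure to the left and glue at $i_0$ to obtain a bi-infinite non-negative supersolution on all of $\Z$.

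The main obstacle is step (iii): even though the $W_i$ have positive mean, individual $W_i$ can be very negative, so the slope process can excurse deeply into the negative region and threaten $U_i\ge0$. The core estimate I would need is that, under the greedy policy, the minimum of the drift random walk $\sum_{j \le i}W_j$ is a.s.\ bounded below by a finite random variable, a consequence of the SLLN together with the integrability implicit in $\E W > 0$. Matching this random-walk estimate to the actual dynamics, which depend nonlinearly on the field through the choice of $K_i$ at each column, is the delicate point; it is here that the ``locality'' of the path-optimization — i.e.\ the fact that $K_i$ only depends on the values $f(i,\cdot)$ in column $i$ — must be exploited to reduce to a renewal/regeneration argument in the random environment, and thereby pass from positive drift to almost-sure existence.
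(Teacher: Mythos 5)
Your overall strategy---a greedy column-by-column construction fueled by i.i.d.\ rewards with positive mean plus the strong law of large numbers---is the same in spirit as the paper's, but the step you yourself flag as ``the delicate point'' (your step (iii)) is exactly where the proof needs an idea that your proposal does not supply, and as stated your policy does not work. Concretely: your reward $W_i=\max_{k\ge0}(f(i,k)-k)$ is attached to an \emph{absolute} height $K_i$ that the path may be unable to reach. The supersolution inequality bounds the slope increment from above, $s_{i+1}\le s_i+f(i,v(i))$, so moving \emph{up} is the constrained direction while moving down is always permitted; a policy that ``climbs towards $K_i$'' therefore cannot in general collect the reward $W_i$, and the comparison of the true dynamics with the random walk $\sum_j W_j$ is unjustified. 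In addition, deciding to climb ``whenever the forthcoming columns would otherwise violate $U\ge 0$'' makes the policy non-adapted (it looks into future columns), which destroys the very independence structure your SLLN/renewal argument would rely on; the regeneration argument you invoke to repair this is not carried out.

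The paper resolves precisely this difficulty by optimizing in the opposite, always-feasible direction and relative to an \emph{adapted} reference height: given $v(n)$ and a provisional value $\bar v(n+1)$ (determined by columns $\le n$), it sets $v(n+1)=\bar v(n+1)-m$ with $m\ge0$ maximizing $f(n+1,\bar v(n+1)-m)-m$, and then $\bar v(n+2)=2v(n+1)-v(n)+f(n+1,v(n+1))$. With $D_n:=\bar v(n)-v(n-1)$ one computes $D_{n+2}-D_{n+1}=\max_{m\ge0}\big(f(n+1,\bar v(n+1)-m)-m\big)$, so the increments of $D$ are \emph{exactly} i.i.d.\ with law $Z_0\vee(-1+Z_1)\vee(-2+Z_2)\vee\cdots$ and mean $\alpha>0$; no renewal or regeneration argument is needed. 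The SLLN for $D_n$, together with the fact that $v(n)-\bar v(n)$ is i.i.d.\ with finite mean, gives $\tfrac1n\big(v(n)-v(n-1)\big)\to\alpha>0$, hence $v$ is a.s.\ bounded below, and non-negativity follows by starting the construction at a high level $N$ and letting $N\to\infty$ (translation invariance). Your proposal, lacking this adapted downward optimization (or any executed substitute for it), has a genuine gap at its central step.
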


\begin{proof} Without loss of generality we can and will assume that $Z$ is essentially bounded from above. 
  We first construct a supersolution $v$ and show that it is almost surely bounded from below. Then the claim will follow
  easily.

  We start by defining $v(0)$. Fix $N \in \N_0$, and let $M$ be the smallest integer for which $M \ge N$ and $f(0,M)>-\infty$.
  The condition in the theorem guarantees that such an $M$ exists.  Let $v(0):=M$.  We will successively construct
  $v(n+1)$ from the previous values $v(0),...,v(n)$ and the obstacle strengths $f(1,j),...,f(n+1,j)$, $j \in \Z$ (the construction for negative values is completely analogous).
  We will always first define a {\em provisional value}
   $\bar v(n+1)$ which we will then (possibly) change to the smaller final value $v(n+1)$ depending on the obstacles
   $f(n+1,j)$, $j \in \Z$. We will perform the construction in such a way that the sequence $v(0),...,v(n),\,\bar v(n+1)$ satisfies the condition of a supersolution at $1,...,n$.
   Note that when we later change $\bar v(n+1)$ to a smaller value $v(n+1)$ then this property still holds for the sequence  $v(0),...,v(n),\, v(n+1)$. 

   We define the provisional values $\bar v(1) =\bar v(-1)$ in such a way that the sequence $\bar v(-1)$, $v(0)$, $\bar v(1)$ satisfies the condition of a supersolution at 0.
   Specifically, we choose $\bar v(1)=\bar v(-1):=v(0)+\Big\lfloor \frac {f(0,v(0))}2\Big \rfloor$.

   Given $v(n)$ and $\bar v(n+1)$ for some $n \in \N_0$, we now define $v(n+1)$ and $\bar v(n+2)$ in such a way that the increment
   $\bar v(n+2)-v(n+1)$ is as large as possible subject to the conditions $v(n+1) \le \bar v(n+1)$ and the supersolution condition at $n+1$.
   The idea behind this choice is that large increments are likely to ensure that $v$ is bounded from below. 
   Clearly, the optimal choices are $v(n+1):=\bar v(n+1)-m$ where $m\in \N_0$ maximizes
   $f(n+1,\bar v(n+1)-m)-m$ and $\bar v(n+2):=2 v(n+1)-v(n)+f(n+1,v(n+1))$ (which satisfies the supersolution condition at $n+1$ with equality). Note that a maximizing $m$ exists
   since $f$ is essentially bounded from above.
   For an illustration of this procedure, see Figure~\ref{fig:discrete_path} below. 
      
   Note that the sequence $v(n)$, $n \in \Z$ is a supersolution by construction.  We check that the function $n\mapsto v(n)$ is almost surely bounded from below.

Define
$D_n:=\bar v(n)-v(n-1),\,n \ge 1$. By construction (and our independence assumptions) the random variables
$D_{n+1}-D_{n}$, $n \in \N$ are independent and identically distributed  with expectation 
$$
\alpha:=\E \big(D_{n+1}-D_n\big)=\E \big(Z_0 \vee (-1+Z_1) \vee (-2+Z_2)\vee ...\big)>0.
$$
(Note that independence of the increments is generally lost if we delete the bar in the definition of $D_n$.)
We have, for $n \in \N$,
$$
v(n)=v(n)-\bar v(n)+D_{n}+v(n-1).
$$
Note that the sequence $v(n)-\bar v(n)$, $n \ge 1$ is i.i.d.~with expected value larger than $-\infty$
which implies $\lim_{n \to \infty} (v(n)-\bar v(n))/n=0$ almost surely. Further, by the strong law of large numbers,  
$\lim_{n \to \infty}D_n/n=\alpha>0$ almost surely. Hence,
$$
\frac 1n\big(v(n)-v(n-1)\big)=\frac 1n\big(v(n)-\bar v(n)\big)+\frac 1n D_n
$$
converges to $\alpha>0$ almost surely. In particular, $v(n)\ge v(n-1)$ for all sufficiently large $n$
(and, analogously, $v(-n)\ge v(-n+1)$ for all sufficiently large $n$).
In particular, the function $v$ is almost surely bounded from below. Using translation invariance, we see that
  the probability that the function $v$ is non-negative converges to 1 as the initially chosen $N$ converges to $\infty$,
  i.e.~we have proven the almost sure existence of a non-negative supersolution.
\end{proof}

\begin{figure}[ht]
\begin{center}
\includegraphics[width=0.8\textwidth]{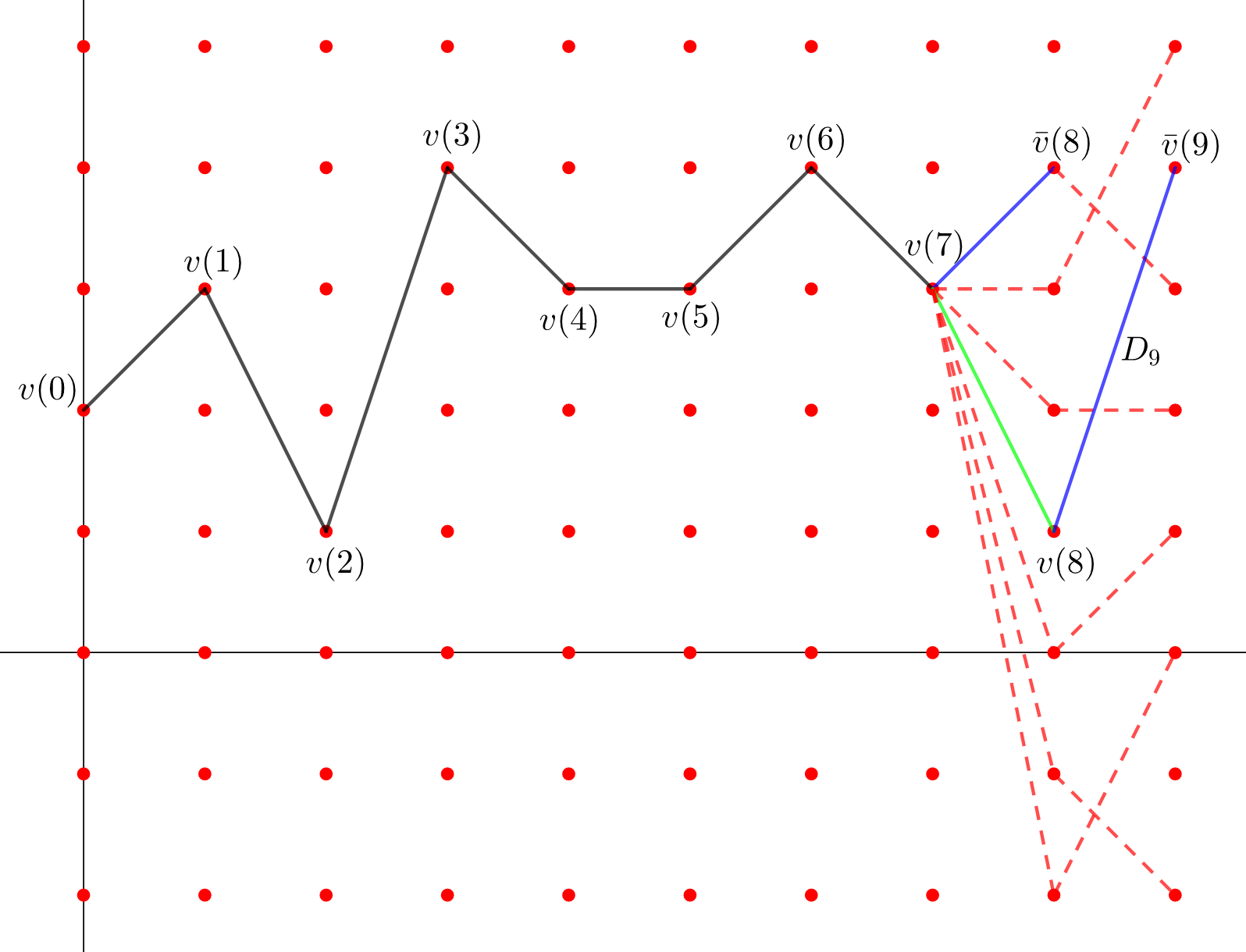}
\caption{Discrete path optimization. Suppose that $ v(0) , \ldots , v(7) $ are already set. 
Then $ \bar{v}(8) $ (connected by the blue line) is determined as the maximal choice such that the supersolution condition $v(i-1)-2v(i)+\bar{v}(i+1) \le f(i,v(i))$ is satisfied for $ i=7 $.
However, we check all possible choices below $ \bar{v}(8) $ (which automatically satisfy the supersolution condition at $i=7$), and choose $ v(8) $ such that the next provisional increment $ D_{9} $ satisfying the supersolution condition at $i=8$ is maximal. 
} \label{fig:discrete_path}
\end{center}
\end{figure}
%
\iffalse
Figure~\ref{fig:discrete_path} shows how $ v(n+1) $ and $ \bar{v}(n+2) $ are chosen if we already have $ v(0) , \ldots , v(n) , \bar{v}(n+1) $.
\fi

\begin{remark}
  In the special case $\P \big(Z_0=1\big)=p$, $\P\big(Z_0=-1\big)=1-p$ the condition in the theorem holds iff
  $p > \frac 12 \big( 3-\sqrt{5} \big) \approx .38$.
\end{remark}

%\begin{remark}
%  In the previous arguments, the fact that $\lambda$ has the particular form stated above is unimportant. All that
%  matters is the sign of $\lambda$. 
%\end{remark}

\begin{cor}
If, for some $F \in \Z$, 
$$ 
\E  \big(Z_0 \vee (-1+Z_1) \vee (-2+Z_2)\vee ...\big)>F,
$$
then, almost surely, there exists a function  $v :\Z \to \N_0$ such that $\Delta_{1} v(i) \le f(i,v(i)) - F $.
\end{cor}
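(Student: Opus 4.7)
The plan is to deduce the corollary directly from the Theorem by absorbing the constant driving force $F$ into the obstacle field. First, I would define shifted obstacle strengths $\tilde f(i,j) := f(i,j) - F$. Since $F \in \Z$, these are again $\Z \cup \{-\infty\}$-valued, and since $F$ is a deterministic constant they remain i.i.d. over $(i,j)$. Letting $\tilde Z, \tilde Z_0, \tilde Z_1, \ldots$ be independent copies of $\tilde f(0,0)$ (realized for instance as $\tilde Z_k := Z_k - F$), everything reduces to verifying the hypothesis of the Theorem for the field $\tilde f$.

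The key computation is that subtracting the constant $F$ from every entry of the supremum pulls the $-F$ outside, giving
$$
\tilde Z_0 \vee (-1 + \tilde Z_1) \vee (-2 + \tilde Z_2) \vee \cdots = -F + \big( Z_0 \vee (-1 + Z_1) \vee (-2 + Z_2) \vee \cdots \big).
$$
Taking expectations, the assumed inequality $\E\big(Z_0 \vee (-1 + Z_1) \vee (-2+Z_2) \vee \cdots\big) > F$ translates into
$$
\E\big(\tilde Z_0 \vee (-1 + \tilde Z_1) \vee (-2 + \tilde Z_2) \vee \cdots\big) > 0,
$$
which is precisely the condition of the Theorem for the shifted field. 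Applying the Theorem then yields, almost surely, a non-negative function $v : \Z \to \N_0$ with $\Delta_{1} v(i) \le \tilde f(i, v(i)) = f(i, v(i)) - F$ for all $i \in \Z$, which is exactly the desired supersolution inequality.

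I do not anticipate any real obstacle here: the only things to check are that the shifted obstacles still satisfy the i.i.d.\ and $\Z \cup \{-\infty\}$-valued assumption of the Theorem, which is immediate since $F$ is a fixed integer, and that the relevant expectation is well defined after the shift, which is also immediate because adding a constant to an integrable (or extended real) random variable does not affect the existence of its expectation. Thus the corollary is essentially a reformulation of the Theorem with the driving force folded into the obstacle law.
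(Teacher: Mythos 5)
Your proof is correct and matches the intended argument: the paper states this corollary without proof precisely because it follows from the Theorem by absorbing the constant $F$ into the obstacle strengths, exactly as you do via $\tilde f(i,j) = f(i,j)-F$ and the observation that the constant pulls out of the supremum. The verification that the shifted field remains i.i.d.\ and $\Z\cup\{-\infty\}$-valued is the only point to check, and you have done so.
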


%%%%%%%%%%%%%%%%%%%%%%%%%%%%%%%%%%%%%%%%%%%%%%%%%%%%%%%%%%%%%%%%%%%%%%%%%%%%%%%%%%%%%%%%%%%%%%%%%%%%%%%%%%%%%%%%%%%%%%%%%
%%%%%%%%%%%%%%%%%%%%%%%%%%%%%%%%%%%%%%%%%%%%%%%%%%%%%%%%%%%%%%%%%%%%%%%%%%%%%%%%%%%%%%%%%%%%%%%%%%%%%%%%%%%%%%%%%%%%%%%%%
\section{Continuum model}
%%%%%%%%%%%%%%%%%%%%%%%%%%%%%%%%%%%%%%%%%%%%%%%%%%%%%%%%%%%%%%%%%%%%%%%%%%%%%%%%%%%%%%%%%%%%%%%%%%%%%%%%%%%%%%%%%%%%%%%%%
%%%%%%%%%%%%%%%%%%%%%%%%%%%%%%%%%%%%%%%%%%%%%%%%%%%%%%%%%%%%%%%%%%%%%%%%%%%%%%%%%%%%%%%%%%%%%%%%%%%%%%%%%%%%%%%%%%%%%%%%%

%%%%%%%%%%%%%%%%%%%%%%%%%%%%%%%%%%%%%%%%%%%%%%%%%%%%%%%%%%%%%%%%%%%%%%%%%%%%%%%%%%%%%%%%%%%%%%%%%%%%%%%%%%%%%%%%%%%%%%%%%
\subsection{Setting}
%%%%%%%%%%%%%%%%%%%%%%%%%%%%%%%%%%%%%%%%%%%%%%%%%%%%%%%%%%%%%%%%%%%%%%%%%%%%%%%%%%%%%%%%%%%%%%%%%%%%%%%%%%%%%%%%%%%%%%%%%

In the continuum setting of equation \eqref{eq:evolution}, we consider $f\colon \R^2\times\Omega \to \R$ to be of the form
\begin{equation} \label{eq:cont_obst}
f(x,y,\omega) = \sum_{j\in\N} f_j(\omega)s(\rho)\varphi\left(\frac{x-x_j(\omega)}{\rho},\frac{y-y_j(\omega)}{\rho} \right),
\end{equation}
where $(x_j,y_j)$ is a 2-dimensional Poisson point process prescribing the centers of the obstacles. The random variables $f_j(\omega) \in [-\infty,\infty)$, which are assumed to be identically distributed and independent of the obstacle centers, prescribe the strength of each individual obstacle and must satisfy $\P \{f_1 \ge k\}>0$ for some $0<k\le 1$. 

In order to prove our result, we have to introduce a small parameter $\rho>0$ which determines the spatial extent of the obstacles, and will be chosen according to the statistical parameters of the obstacle distribution. Therefore, we assume that  the function $\varphi\in C_c^\infty(\R^2)$ satisfies $\varphi|_{[-1,1]^2}\ge 1$ and $\varphi|_{\R^2\setminus B_\alpha(0)} = 0$ for some $\alpha > \sqrt{2}$. As one can read off \eqref{eq:cont_obst}, this implies that the obstacles achieve their full strength $f_j$ on a square of side-length $2\rho$ and vanish outside a ball of radius $\alpha\rho$. Since small obstacles have a small effect on the propagating interface, we rescale their force by $s(\rho)$ such that their effect remains constant when changing $\rho$. It will turn out that the choice $s(\rho) = \frac{2}{\rho}$ is suitable.

Such an assumption of very small obstacles is also made in \cite{ForemanMakin}, where point obstacles are considered in a model for dislocation evolution. Their assumptions can be interpreted as the $\rho\to0$ limit of our model.

%In this case, we have an array of positive and negative obstacles that now have positive diameter and random positions. 
%We suppose that they are distributed according to a Poisson point process and that the positive ones are strong enough to produce an inclination $k$ (at the exit) \todo{Kannst Du das bitte verst\"andlicher verfassen?},  %which we fix. 
Again, we will show a.s.~existence of a non-negative viscosity supersolution of
\begin{equation} 
\label{eq:supersolution}
v''(x) - f( x , v(x) ) + F \le 0.
\end{equation}
The function $v$, that we construct, will be piecewise quadratic, 
and in points of non-differentiability the condition on viscosity solution will be trivially met, as from our construction it will hold that
\[ \lim_{x \nearrow a} v'(x) \ge \lim_{x \searrow a} v'(x). \]

To simplify this construction, we will work with the following setting. We split the obstacles into those with $f_j\ge k$ (and refer to them as \emph{positive}) and into those with $f_j<0$ (\emph{negative}). Obstacles for which $f_j\in[0,k)$ will be ignored. The centers of the positive/negative obstacles $ ( x_{j}^{\pm} , y_{j}^{\pm} ) $ are now distributed according to independent Poisson point processes on $\R^2$ with parameters $ \lambda^{\pm}$. 
%Without loss of generality and to simplify the exposition we may assume that $c_0=1$. 
Then all positive obstacles have full strength of at least $k s(\rho)$ in the squares of side-length $ 2 \rho $ centered at $ ( x_{j}^{+} , y_{j}^{+} ) $. We refer to these squares as \emph{cores} of the obstacles and denote $ks(\rho) = \frac{2k}{\rho} = S$.

The response of the medium to a given interface $(x,v(x))\subset \R^2$ may thus be estimated by
\begin{align*} f(x,v(x)) &\ge f_\rho( x , v(x) )  \\
&= \begin{cases} 
{\displaystyle \sum_{j \in \N} S \varphi( \tfrac{ x - x_{j}^{+} }{ \rho } \tfrac{ v(x) - y_{j}^{+} }{\rho} ) },
& \text{if for all  $i \in \N$ we have $\dist( (x,v(x)) , ( x_{i}^{-} , y_{i}^{-} ) ) > \alpha \rho$}, \\
-\i & \text{else},
\end{cases}
\end{align*}
dropping the dependence on $\omega\in\Omega$ for notational convenience.

In fact, as shown in Figure~\ref{figure:supersolution}, 
we will construct a supersolution that will completely avoid negative obstacles. 
Therefore, their precise strength does not matter. 
As for the positive obstacles, we will only employ that they exert force (at least) $S$ in the square of size $ 2 \rho $.

\begin{figure}[ht]
\begin{center}
\includegraphics[width=.7\textwidth]{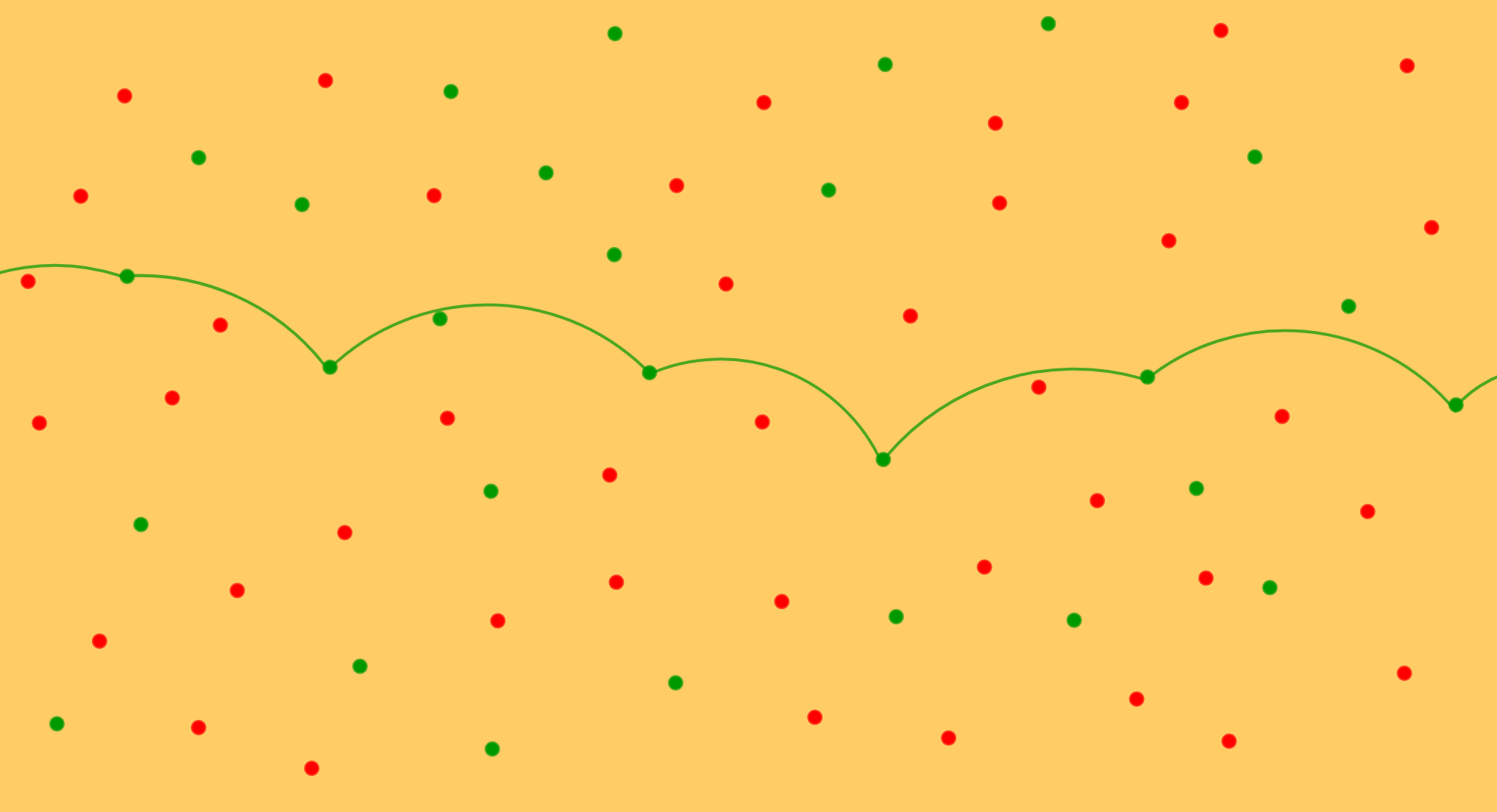}
\end{center}
\vspace{-5mm}
\caption{Idea for the construction. It follows from \eqref{eq:supersolution} that outside the obstacles any supersolution is concave, and inside the positive obstacles it may be convex. Since we are looking for a positive supersolution (green line), it must pass through sufficiently many positive obstacles (green dots), in which it can turn upwards, and avoid the negative ones (red dots).}
\label{figure:supersolution}
\end{figure}

%%%%%%%%%%%%%%%%%%%%%%%%%%%%%%%%%%%%%%%%%%%%%%%%%%%%%%%%%%%%%%%%%%%%%%%%%%%%%%%%%%%%%%%%%%%%%%%%%%%%%%%%%%%%%%%%%%%%%%%%%
\subsection{Localization}
%%%%%%%%%%%%%%%%%%%%%%%%%%%%%%%%%%%%%%%%%%%%%%%%%%%%%%%%%%%%%%%%%%%%%%%%%%%%%%%%%%%%%%%%%%%%%%%%%%%%%%%%%%%%%%%%%%%%%%%%%
First we have to localize enough positive obstacles to construct a blocking supersolution.
Therefore, let us define
\begin{equation}
\label{eq:def-Q}
Q_{i,j} := \Big( [ - \tfrac{l}{2} , \tfrac{l}{2} ] + i (l+d) \Big) \times [ j h, (j+1) h],
\quad i \in \Z, \ j \in \N. 
\end{equation}
We will consider the obstacles with entire cores lying in one of $ Q_{i,j} $.
As depicted in Figure~\ref{figure:decomposition},
we thus have columns of rectangles with length $l$ and height $h$, and between them there is a free space of width $d$.
\begin{figure}[ht]
\begin{center}
\includegraphics[width=.4\textwidth]{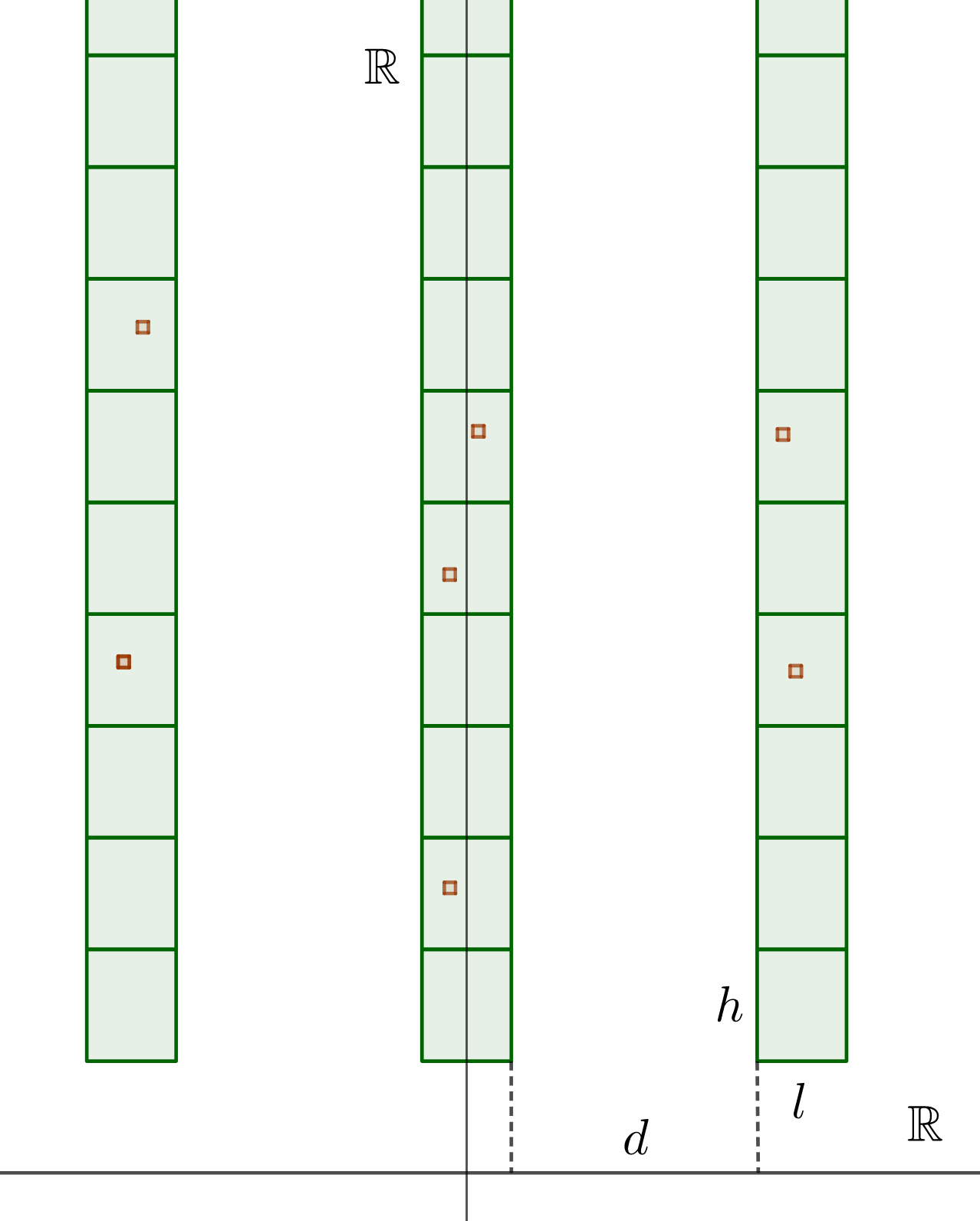}
\end{center}
\vspace{-5mm}
\caption{Decomposition of the upper half-plane. The $ Q_{i,j} $ from~\eqref{eq:def-Q} are the green rectangles of length $l$ and height $h$.}
\label{figure:decomposition}
\end{figure}
For now, these scales are still completely free.
%We will consider the obstacles with centers in  
%
%\[ \tilde{Q}_{i,j} := \Big( [ - \tfrac{l}{2} + \rho , \tfrac{l}{2} - \rho ] + i (l+d) \Big) \times [ j h, (j+1) h]. \]
%
%Thus, no core intersects the free space.
We start at the height $h$ so that there is no intersection of the localized positive obstacles with the $x$-axis
as long as $ h > \alpha \rho - \rho $.
%

%%%%%%%%%%%%%%%%%%%%%%%%%%%%%%%%%%%%%%%%%%%%%%%%%%%%%%%%%%%%%%%%%%%%%%%%%%%%%%%%%%%%%%%%%%%%%%%%%%%%%%%%%%%%%%%%%%%%%%%%%
\subsection{Inside a core}
\label{subsect:inside-a-core}
%%%%%%%%%%%%%%%%%%%%%%%%%%%%%%%%%%%%%%%%%%%%%%%%%%%%%%%%%%%%%%%%%%%%%%%%%%%%%%%%%%%%%%%%%%%%%%%%%%%%%%%%%%%%%%%%%%%%%%%%%
\begin{figure}[ht]
\begin{center}
\includegraphics[width=0.2\textwidth]{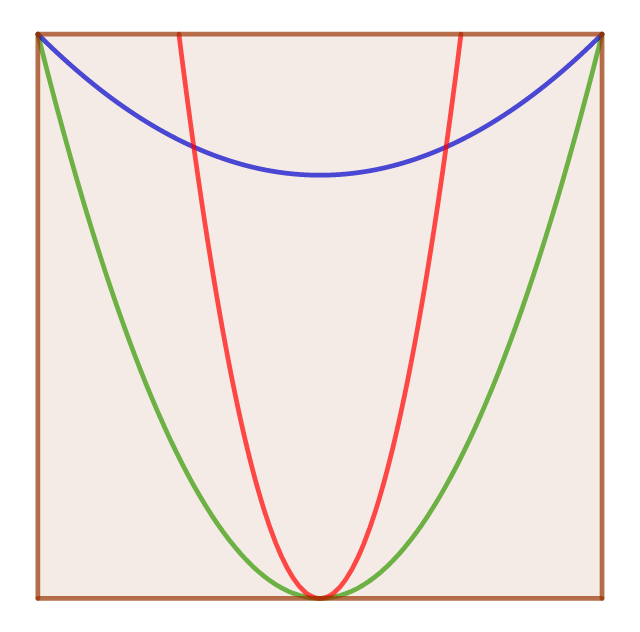}
\end{center}
\vspace{-5mm}
\caption{Parabolas within a core. We take parabolas through the upper corners of a core with the second derivative $ \frac{S}{2} $. For $ k < 4 $ the whole parabola lies within the core, the cases $k=1,4$ being depicted in blue and green. For larger $k$, the force of the obstacle could not be fully exploited, as suggested by the red line.}
\label{figure:ovira}
\end{figure}
%
%-----ˇˇˇ alter Text ----------------------------------------------------------------------------------------------------
\iffalse
{\cred Within the core of a positive obstacle, a sufficient condition to fulfil~\eqref{eq:supersolution} reads $ v''(x) \le S - F. $
Let us suppose $ F \le \frac{S}{2} $ (since in fact, as we will see, we may only pin $ F \ll S $).
The maximal slope at the exit of a parabola $ v'' = \frac{S}{2} $ having the vertex on the mid vertical line of the core is
%
\[ k = \min \{ \tfrac{S}{2} \rho , \sqrt{ 2 S \rho } \}. \]
%
The second expression follows from the fact that a large force results in exiting the obstacle at the upper edge and thus not exploiting it fully (the red line in Figure~\ref{figure:ovira}). 
For $ k \le 4 $, however, this does not happen, and we have $ k = \tfrac{S}{2} \rho $.
We will suppose even $ k \le 1 $ anyway and take the parabola that exits the core at its upper corners.
The cases $k=1,4$ are depicted in blue and green in Figure~\ref{figure:ovira}.}
\fi
%-----^^^ alter Text ----------------------------------------------------------------------------------------------------
Within the core of a positive obstacle, a sufficient condition to fulfil~\eqref{eq:supersolution} reads $ v''(x) \le S - F. $
Let us suppose $ F \le \frac{S}{2} $ (since in fact, as we will see, we may only pin $ F \ll S $).
We take the parabola with $ v'' = \frac{S}{2} $ that has its vertex on the mid vertical line of the core
and goes through the upper corners of the core.
Its inclination at the upper corners has modulus $ k $, and for $ k < 4 $ its vertex lies within the core, see Figure~\ref{figure:ovira}.
%%%%%%%%%%%%%%%%%%%%%%%%%%%%%%%%%%%%%%%%%%%%%%%%%%%%%%%%%%%%%%%%%%%%%%%%%%%%%%%%%%%%%%%%%%%%%%%%%%%%%%%%%%%%%%%%%%%%%%%%%
\subsection{An upper bound on the pinned force}
%%%%%%%%%%%%%%%%%%%%%%%%%%%%%%%%%%%%%%%%%%%%%%%%%%%%%%%%%%%%%%%%%%%%%%%%%%%%%%%%%%%%%%%%%%%%%%%%%%%%%%%%%%%%%%%%%%%%%%%%%
Let us first determine the force $F$ that we may block when we have two positive obstacles at a given distance if we suppose that there is no negative obstacle in the vertical strip between them.
The exiting points are, as specified in the previous section, the right resp.\ left corner. 
Let their positions for the sake of simplicity be $ A=(0,0) $ and $ B=(m,n) $ with $ m > 0 $.

\begin{figure}[ht]
\begin{center}
\includegraphics[width=.5\textwidth]{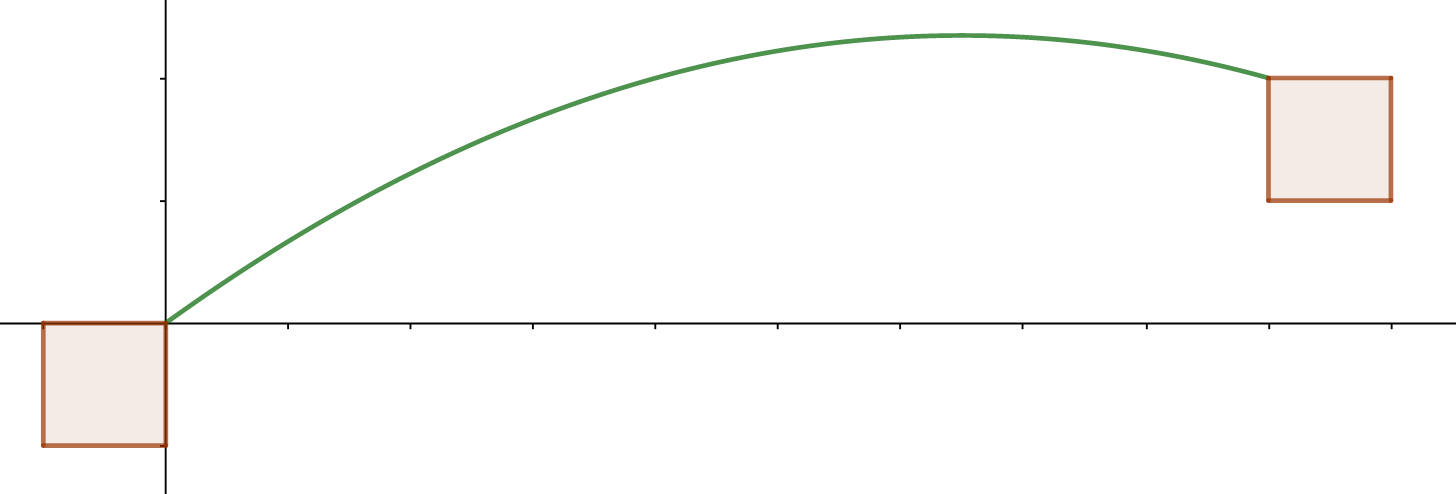}
\end{center}
\vspace{-5mm}
\caption{Parabola connecting two obstacles. The conditions are that it must connect the respective corners, fulfil the condition on the second derivative, and have the appropriate inclinations at the corners so as to be a building block for a viscosity supersolution.}
\label{figure:parabola}
\end{figure}

The conditions read
\begin{equation} \label{eq:viscosity}
v''(x) + F \le 0, \quad v'(0) \le k, \quad v'(m) \ge -k
\end{equation}
%
\iffalse
%--------------BEWEIS VERSTECKT---------------------------------------------------------------------------------------
By integrating twice from 0 to $x$, we get
%
\begin{eqnarray*}
v''(x) + F & \le & 0, \\
v'(x) - v'(0) + F x  & \le & 0, \\
v(x) - v(0) - v'(0) x + \frac{F}{2} x^{2} & \le & 0,
\end{eqnarray*}
%
we have
%
\[ v(x) \le v'(0) x - \frac{F}{2} x^{2}. \]
%
Since $v$ must go through $B$, from
%
\[ n = v(m) \le v'(0) m - \frac{F}{2} m^{2} \]
%
we get the condition
%
\[ \boxed{ v'(0) \ge \frac{F}{2} m + \frac{n}{m}. } \]
%
Now we repeat the calculation, this time integrating over $ [x,m] $. Thus,
%
\begin{eqnarray*}
v'(m) - v'(x) + F ( m - x )  & \le & 0 \\
v'(m)( m - x ) - ( v(m) - v(x) ) + \frac{F}{2} ( m - x )^{2} & \le & 0,
\end{eqnarray*}
%
and
%
\[ v(x) \le n - v'(m) ( m - x ) - \frac{F}{2} ( m - x )^{2}. \]
% 
Thus we get also a condition on the inclination at $B$. Since
%
\[ 0 = v(0) \le n - v'(m) m - \frac{F}{2} m^{2}, \]
%
it must be
%
\[ \boxed{ - v'(m) \ge \frac{F}{2} m - \frac{n}{m}. } \]
%
\fi
%-----------------------------------------------------------------------------------------------------------------------
with the latter two ensuring that this function together with the parabolas in the cores forms a supersolution in the viscosity sense. Hence, necessary conditions for existence of a supersolution through the points $A$ and $B$ are
\begin{equation*}
k \ge \frac{F}{2} m + \frac{n}{m}
\quad \mbox{and} \quad
k \ge \frac{F}{2} m - \frac{n}{m}. 
\end{equation*} 
In the other direction, let $ k \ge \frac{F}{2} m + \frac{n}{m} $ and $ k \ge \frac{F}{2} m - \frac{n}{m} $. 
Then the parabola 
\[ v(x) = \left( \frac{F}{2} m + \frac{n}{m} \right) x - \frac{F}{2} x^{2} \]
is a supersolution between $A$ and $B$ since $ v'' + F = 0 $ and
\[ v(0) = 0, \quad
v(m) = n, \quad
v'(0) = \frac{F}{2} m + \frac{n}{m} \le k 
\quad \mbox{and} \quad
- v'(m) = \frac{F}{2} m - \frac{n}{m} \le k. \]
\begin{con}
Having two obstacles that produce an exiting inclination $k$ with horizontal distance $m$ and vertical $n$, we may pin any force of magnitude
\[ F \le 2 \frac{ k m - |n| }{ m^{2} }. \]
\end{con}
To control both distances, we will consider only parabolas between obstacles from the neighbouring columns
whose heights differs at most by one unit.
If we thus connect two obstacles in boxes $ Q_{i,j} $ and $ Q_{i+1,j+e} $ with $ e \in \{-1,0,1\} $, then $ d \le m < 2l + d $ and $ |n| < 2h $. 
We get a positive force if 
\begin{equation} \label{eq:1}
k d > 2 h.
\end{equation}
We then block at least
\begin{equation} \label{eq:2}
F \le 2 \frac{ k d - 2 h }{ (d+2l)^{2} }.
\end{equation}

%%%%%%%%%%%%%%%%%%%%%%%%%%%%%%%%%%%%%%%%%%%%%%%%%%%%%%%%%%%%%%%%%%%%%%%%%%%%%%%%%%%%%%%%%%%%%%%%%%%%%%%%%%%%%%%%%%%%%%%%%
\subsection{Length of the line between parabolas}
%%%%%%%%%%%%%%%%%%%%%%%%%%%%%%%%%%%%%%%%%%%%%%%%%%%%%%%%%%%%%%%%%%%%%%%%%%%%%%%%%%%%%%%%%%%%%%%%%%%%%%%%%%%%%%%%%%%%%%%%%

\begin{figure}[ht]
\begin{center}
\includegraphics[width=.59\textwidth]{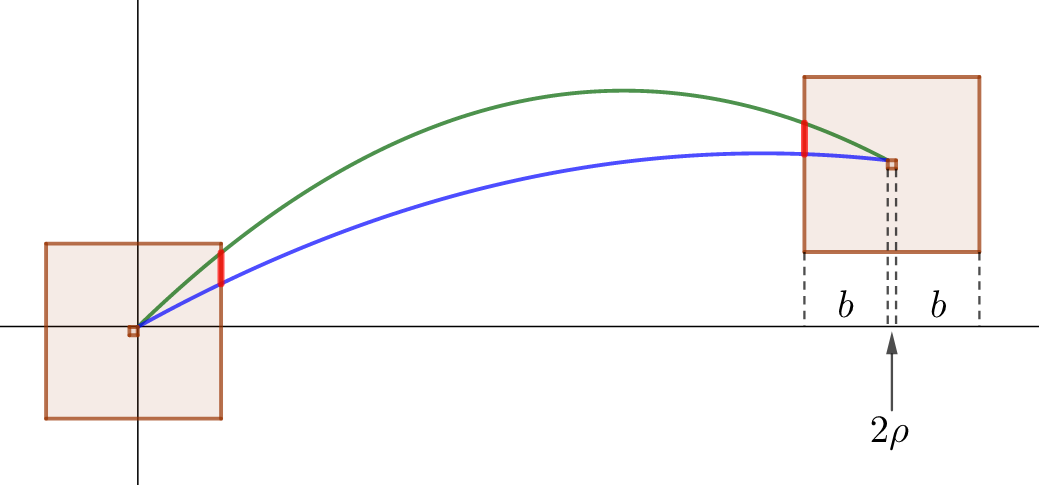}
\end{center}
\vspace{-5mm}
\caption{Length of the line between parabolas. Around each core (the tiny squares) we have a concentric square of thickness $b$ where no negative obstacle may lie. The negative obstacles thus intersect the largest number of parabolas if their centers lie on the intersection of the border of these squares with the ray of admissible parabola (the red lines).}
\label{figure:parabola-exit}
\end{figure}

%\todo[inline]{Die alte Version ist rot, die neue blau. Nimm, was Du besser findest (oder schreibe etwas Drittes).}

%{\cred We must take care not to hit any negative obstacle with our supersolution. 
%This we achieve by taking only such positive obstacles that do not have any negative obstacles in a neighbourhood (in some concentric square as shown in Figure~\ref{figure:parabola-exit}) and for which there are not too many (say less than $N$) negative obstacles in the whole region where parabolas may lie.
%We consider a ray of possible parabolas and assess an admissible size of (negative) obstacles as follows:}

We must take care not to intersect any negative obstacle with our supersolution. 
This we achieve by considering only the positive obstacles that fulfill the following two conditions:
\begin{itemize}
\item
There are no negative obstacles in the square centered at the center of the obstacle with the side $ 2 ( \rho + b ) $ where  $b$ is a new scale. Hence, we have a strip of thickness $b$ around the core as shown in Figure~\ref{figure:parabola-exit}.
\item
There are less than $N$ (for now arbitrary) negative obstacles in the whole region where parabolas may lie.
\end{itemize}
We consider a ray of possible parabolas and assess an admissible size of (negative) obstacles as follows:

The most problematic position for negative obstacles is on the border of the concentric square (the red line in Figure~\ref{figure:parabola-exit}).
Suppose all the negative obstacles lie on this line. Then they cover a height up to $ 2 (N-1) \alpha \rho $. We must, however, take into account that parabolas have a certain inclination. 
If a parabola meets this line outside the balls around the centers of the negative obstacles with radius $ 2 \alpha \rho $ (the dotted line in Figure~\ref{figure:blocking-obstacles}), then it does not intersect any negative obstacle (lying inside a ball with radius $ \alpha \rho $, the red circle in Figure~\ref{figure:blocking-obstacles}), as its inclination surely does not exceed 1. Thus, the negative obstacles block at most $ 4 (N-1) \alpha \rho $.
If this length is shorter than the half of the red line, we surely find a parabola around them on both sides.

%\todo[inline]{Michael hat gemerkt, dass wir mehr ausschlie{\ss}en m\"ussen, da Parabeln nicht horizontal diese Strecke treffen. Deswegen der neue magenta Text oben und das neue Bild unten.}

\begin{figure}[ht]
\begin{center}
\includegraphics[width=.2\textwidth]{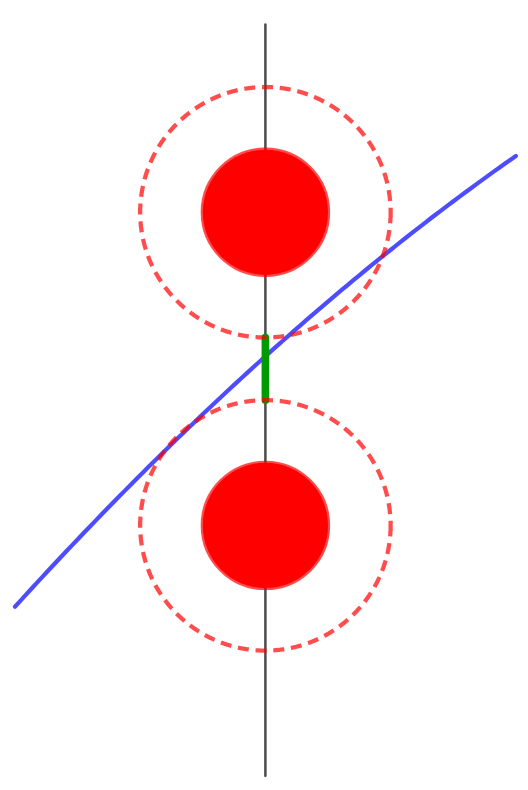}
\end{center}
\vspace{-5mm}
\caption{Estimating the length of the blocked line. If a negative obstacle lies on the critical line, it may happen that it intersects a parabola outside of this line, as the parabola has a certain inclination. However, since for sure its inclination is less than 1, this cannot happen if we allow parabolas to pass this line only on the parts that are at least $ 2 \alpha \rho $ away from the center of any negative obstacle.}
\label{figure:blocking-obstacles}
\end{figure}

If a ray of parabolas is given by $ u_{1} $ and $ u_{2} $ with
\[ u_{1}''(x) = - F_{1}
\quad \mbox{and} \quad
u_{2}''(x) = - F_{2}, \]
where $ F_{1} > F_{2} $ %, and if the thickness of the strip around each core is $b$, 
then the length of the part of the border of each square determined by this ray is given by %------------------------------------------------------------------------------------------------------------------------
\[ u_{1}( b ) - u_{2}( b ) 
= u_{1}( m - b ) - u_{2}( m - b ) 
= \frac{ F_{1} - F_{2} }{2} b ( m - b ). \]
We also notice: 
Since we restrict ourselves to $ k \le 1 $, that parabolas surely exit the concentric square at the right resp.\ left side.
%
%%%%%%%%%%%%%%%%%%%%%%%%%%%%%%%%%%%%%%%%%%%%%%%%%%%%%%%%%%%%%%%%%%%%%%%%%%%%%%%%%%%%%%%%%%%%%%%%%%%%%%%%%%%%%%%%%%%%%%%%%
\subsection{Position of vertex}
%%%%%%%%%%%%%%%%%%%%%%%%%%%%%%%%%%%%%%%%%%%%%%%%%%%%%%%%%%%%%%%%%%%%%%%%%%%%%%%%%%%%%%%%%%%%%%%%%%%%%%%%%%%%%%%%%%%%%%%%%
The parabola 
\[ u(x) = \left( \frac{F}{2} m + \frac{n}{m} \right) x - \frac{F}{2} x^{2} \]
has its vertex in
\[ x_{0} = \frac{ \frac{F}{2} m + \frac{n}{m} }{ F }, \quad
y_{0} = \frac{ \left( \frac{F}{2} m + \frac{n}{m} \right)^{2} }{ 2 F }. \]
Since we want to control its height only between $ (0,0) $ and $ (m,n) $, we have to look at the case $ 0 \le x_{0} \le m $. 
In this case
\begin{equation} \label{eq:3}
0 \le \frac{ \frac{F}{2} m + \frac{n}{m} }{ F } \le m
\quad \mbox{or} \quad
F m^{2} \ge 2 |n|.
\end{equation}  
It suffices to consider $ n \ge 0 $.
If $F$ is too small to fulfil \eqref{eq:3}, 
the whole piece of the parabola lies lower than the higher obstacle.
Otherwise, let us allow only parabolas with $ y_{0} \le 2h $.
Since
$ y_{0} = \frac{F}{2} x_{0}^{2} \le \frac{F}{2} m^{2}, $
a sufficient condition for this is
\begin{equation} \label{eq:4}
F \le \frac{ 4 h }{ (2l+d)^{2} }.
\end{equation}
For every such $F$, no point of the corresponding parabola between the obstacles lies more than $ 2h $ higher than the lower obstacle. 
Thus, these parabolas surely lie in rectangles of the height $3h$ depicted in Figure~\ref{figure:bound-parabola}. 

\begin{figure}[ht]
\begin{center}
\includegraphics[width=.4\textwidth]{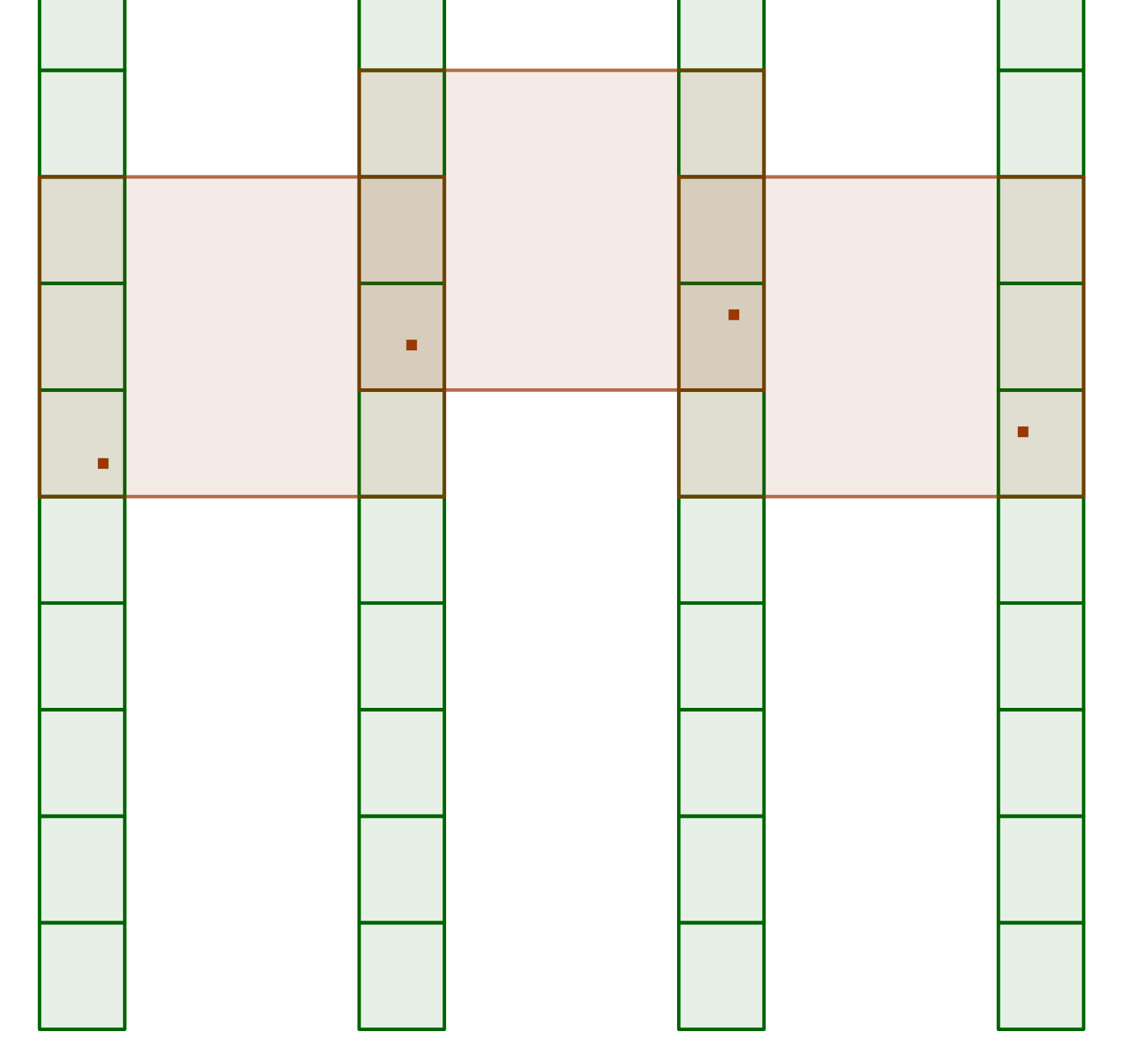}
\end{center}
\vspace{-5mm}
\caption{Admissible boxes for parabolas. We set the condition on the connecting parabola that its highest point lies at most $ 2 h $ higher that the lower obstacle. For the depicted situation, the respective parabolas lie in the shaded rectangles with side-lengths $ 2l+d $ and $3h$. }
\label{figure:bound-parabola}
\end{figure}

%{\cred
%Let us set the admissible height of the parabola to the second rectangle above the lower obstacle,
%as depicted in Figure~\ref{figure:bound-parabola}.
%Since
%
%$ y_{0} = \frac{F}{2} x_{0}^{2} \le \frac{F}{2} m^{2}, $
%
%this will surely hold if $ y_{0} \le 2 h $. 
%Therefore, a sufficient condition is
%
%\begin{equation} \label{eq:4}
%F \le \frac{ 4 h }{ (2l+d)^{2} }
%\end{equation}
%
%Indeed all such $F$ are allowed.
%Namely, if $F$ is too small to fulfil \eqref{eq:3}, 
%the whole piece of the parabola lies between the heights of obstacles.
%}

%%%%%%%%%%%%%%%%%%%%%%%%%%%%%%%%%%%%%%%%%%%%%%%%%%%%%%%%%%%%%%%%%%%%%%%%%%%%%%%%%%%%%%%%%%%%%%%%%%%%%%%%%%%%%%%%%%%%%%%%%
\subsection{Percolation}
\label{subsect:percolation}
%%%%%%%%%%%%%%%%%%%%%%%%%%%%%%%%%%%%%%%%%%%%%%%%%%%%%%%%%%%%%%%%%%%%%%%%%%%%%%%%%%%%%%%%%%%%%%%%%%%%%%%%%%%%%%%%%%%%%%%%%
Now we must obtain a.s.~a sequence of ``good'' rectangles $ \{ Q_{i, j(i) } \}_{i \in \Z} $ that contain positive obstacles and avoid negative ones
such that $ | j(i+1) - j(i) | \le 1 $ for all $ i \in \Z $.
This can be formulated as a problem in Lipschitz percolation as follows.

Let $d \ge 1$ and consider {\em $d$-independent} site percolation on $\Z^{n+1}$ with parameter $p \in [0,1]$, i.e.~$\xi(u)$, $u\in \Z^{n+1}$
are random variables taking values in $\{0,1\}$ such that $\P\big(\xi(u)=1\big) =p$ for every $u$ and  $\xi(u)$, $u \in M$ are independent
whenever $M$ is a subset of $\Z^{n+1}$ for which
the following holds: if $u\neq v$ are elements of $M$, then either $|u_{n+1}-v_{n+1}|\ge d$ or $(u_1,...,u_n)\neq (v_1,...,v_n)$.
Note that the case $d=1$ corresponds to the independent site percolation set-up. If $\xi(u)=1$, then we say that the site $u$ is {\em open} and
{\em closed} otherwise. The last component $u_{n+1}$ of $u \in \Z^{n+1}$ plays a different role compared to $(u_1,...,u_n)$. We will
sometimes call the last coordinate \emph{vertical} and the others \emph{horizontal}.

In this set-up the following result holds.

\begin{prop}
\label{prop:percolation}
There is a critical probability $ p_{0}=p_0(n,d) \in (0,1) $ such that for every $ p > p_{0} $ there is a Lipschitz percolation cluster,
i.e.~almost surely, there exists an {\em open Lipschitz surface}, i.e.~a function $\phi:\Z^n \to \N$ such that $|\phi(z)-\phi(\bar z)|\le 1$ whenver $|z-\bar z|=1$
and that $\xi(\phi(z))=1$ for all $z \in \Z^n$.
\end{prop}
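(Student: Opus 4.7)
The plan is to adapt the Peierls-type enumeration argument of \cite{DDGHS} to the present finite-dependence setting via a sparsification trick. The event of interest is monotone increasing in the configuration $\xi$, so one could instead pursue stochastic domination \`a la Liggett--Schonmann--Stacey, but the mild mismatch between the hypothesis here (joint independence of pairwise-separated \emph{single sites}) and the usual $k$-dependence formulation (joint independence of separated \emph{subsets}) makes the direct Peierls route cleaner.

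The first step is to recall from \cite{DDGHS} that in the i.i.d.\ case the failure of the Lipschitz-surface event is witnessed by the presence of a \emph{blocking structure}: a connected set $S\subset\Z^{n+1}$ of closed sites of controlled geometry (e.g.\ containing a site above the origin and whose horizontal projection is a connected set in $\Z^n$). The number of candidate $S$ of size $N$ grows at most like $C(n)^N$ for a purely geometric constant $C(n)$, and in the i.i.d.\ case each such $S$ is closed with probability $(1-p)^N$, so a union bound rules out arbitrarily large blocking structures as soon as $1-p<1/C(n)$. The second step, specific to the present setting, is a \emph{sparsification} trick: given any $S$, greedily extract a subset $S'\subset S$ whose elements are pairwise at $\ell^\infty$-distance at least $d$. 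A standard volume-packing argument yields $|S'|\ge |S|/c(n,d)$ with $c(n,d)$ depending only on the number of lattice points in a $d$-ball of $\Z^{n+1}$. By the hypothesis of the proposition, $(\xi(u))_{u\in S'}$ are then jointly independent, so
\[
\P(S \text{ closed})\le \P(S' \text{ closed}) = (1-p)^{|S'|} \le (1-p)^{|S|/c(n,d)}.
\]

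Substituting this refined bound into the \cite{DDGHS} enumeration and requiring $(1-p)^{1/c(n,d)}<1/C(n)$ defines a threshold $p_0(n,d)\in(0,1)$; for $p>p_0(n,d)$ the resulting geometric sum converges and a Borel--Cantelli argument rules out arbitrarily large blocking structures almost surely, which is equivalent to the a.s.\ existence of an open Lipschitz surface $\phi$. The main obstacle is verifying that the combinatorial enumeration of blocking structures from \cite{DDGHS} survives the substitution $(1-p)\leadsto(1-p)^{1/c(n,d)}$ unchanged; this is the case because that enumeration is purely geometric and does not involve the law of $\xi$, so only the entropy-versus-probability inequality in the union bound needs to be rebalanced.
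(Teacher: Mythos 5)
Your second step---extracting from the set of required-closed sites a subset that is pairwise separated and hence jointly independent, thereby replacing $(1-p)^{N}$ by $(1-p)^{N/c(n,d)}$ in the union bound---is sound, and it is in spirit exactly what the paper does. (The paper is slightly sharper: by the hypothesis, sites in \emph{different} columns are always independent, so one only needs to sparsify \emph{within each column} with vertical spacing $d$; this gives the bound $q^{\lfloor (U-1)/d\rfloor+1}\le q^{U/d}$ rather than your $q^{N/c(n,d)}$ with $c(n,d)$ of order $d^{\,n+1}$. This only affects the value of $p_0$, not its existence, so your coarser packing constant is acceptable.)

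The genuine gap is in your first step, where you ``recall from \cite{DDGHS}'' that the failure of the Lipschitz-surface event is witnessed by a \emph{connected} set $S$ of closed sites with connected horizontal projection. That is not the characterization used in \cite{DDGHS} or \cite{GH}, and as stated it is not something you can take off the shelf. The actual witnesses are admissible $\lambda$-paths (equivalently, the hills/mountains of \cite{GH}): paths built from unit steps upwards, whose endpoints must be closed, and diagonal steps downwards, whose endpoints are unconstrained. The closed sites demanded by such a witness are in general \emph{not} connected---consecutive up-step endpoints may be separated by arbitrarily many down-steps---so a Peierls enumeration over connected closed sets does not capture the complement of the event, and the claimed equivalence is unproven (and doubtful). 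Moreover, the entropy bound you need is not an enumeration of closed sets of size $N$ but an enumeration of paths: the count $\binom{U+D}{U}(2n)^{D}$ is only exponential in the number $U$ of closed sites because the number $D$ of down-steps is tied to $U$ through $U-D=h\ge 0$; your sketch glosses over this point, and it is precisely what makes the probability--entropy balance close. The repair is straightforward: run your sparsification on the set of up-step endpoints of an admissible $\lambda$-path and insert $q^{U/c(n,d)}$ (or the sharper $q^{U/d}$) into the path-counting sum of \cite{GH}, together with the equivalence between existence of an open Lipschitz surface and absence of admissible paths $(z,0)\rightarrowtail(0,h)$ for all large $h$---which is how the paper proceeds.
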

\begin{proof}
For $d=1$ this is \cite[Theorem 1]{DDGHS} or \cite[Theorem 1]{GH}. In the general case, we will sketch the proof following that of \cite[Theorem 1]{GH} rather closely.

For each $ u \in \Z^{n+1} $, $h(u) := u_{n+1}$ is its \emph{height}. Let 
$ e_{1} , \ldots , e_{n} , e_{n+1} $ be the standard basis vectors in $\R^{n+1}$.

A $ \lambda $-\emph{path} from $u$ to $v$ is a finite sequence of distinct states 
$ x_{0} , \ldots , x_{k} $ with $ x_{0} = u $ and $ x_{k} = v $ such that for every $ i = 1 , \ldots , k $
we have
\begin{itemize}
\item either $ x_{i} - x_{i-1} = e_{n+1} $, in which case we speak of a \emph{step upwards}, 
\item or $ x_{i} - x_{i-1} \in \{ \pm e_{1} - e_{n+1} , \ldots , \pm e_{n} - e_{n+1} \} $ 
that is a \emph{step downwards}.
\end{itemize}
A $ \lambda $-path is called \emph{admissible} if the endpoint of every step upwards is closed. We denote by $ u \rightarrowtail v $ the event that there exists
an admissible $ \lambda $-path from $u$ to $v$.
As was shown in \cite[Section 4]{GH}, an open Lipschitz surface exists if and only if there exists some $h_0>0$ such that there does not exist any $u=(z,0)$ with $z \in \Z^n$ and any
$h \ge h_0$ such that $ u \rightarrowtail (0,h)$ (for a slightly simpler proof [in a slightly more general set-up] avoiding the concept of {\em hills} and {\em mountains}
see \cite{DST,DSW}).
Therefore, we have to show that for sufficiently large $p<1$ and large $h \in \N$ it is unlikely that there exists an admissible path starting from any point of the form
$(z,0)$ to $(0,h)$. Due to a possible lack of translation invariance, we need to modify the proof slightly compared to those mentioned above.

For a $ \lambda $-path $\pi$ from $(z,0)$ to $(0,h)$, we denote by $U$ and $D$ the number of steps upwards resp.~downwards. Then $h=U-D$ and $|z|\le D$ and $\pi$ is admissible
if {\em every} step upwards ends at a closed site. 
Due to $d$-independence (and the fact that a $ \lambda $-path consists of distinct points) the probability of this happening is at most  
$ q^{ \lfloor \frac{U-1}{d} \rfloor + 1 } \le q^{ \frac{U}{d} } $. % (see also Figure~\ref{figure:percolation-dependent} below with explanation). 
Figure~\ref{figure:percolation-dependent} shows such an admissible $ \lambda $-path. 
The sites in red must be closed, and we obtain the bound on the probability by realizing that at least $ \lfloor \frac{U-1}{d} \rfloor + 1 $ of them are independent. In every column, we may simply take the highest one and then every $d$-th one that lies in the path. 
This bound is also optimal, take, e.g., the straight vertical path $ (0,0) \rightarrowtail (0, U d + 1 ) $.

\begin{figure}[ht]
\begin{center}
\includegraphics[width=0.5\textwidth]{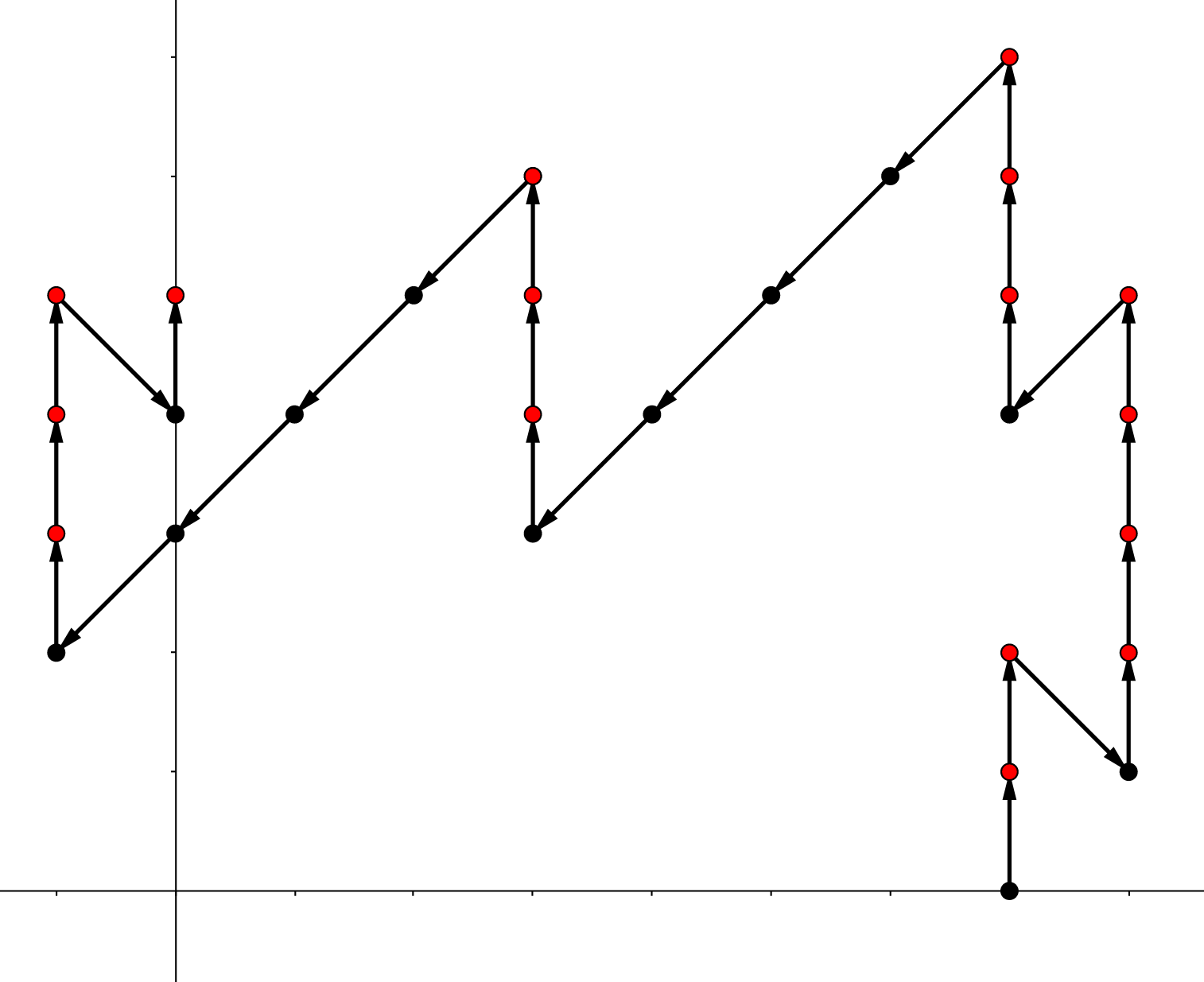}
\end{center}
\vspace{-5mm}
\caption{Admissible $\lambda$-path. A $\lambda$-path between two points consists of steps upwards and diagonally downwards.
It is admissible if all steps upwards end in a closed site. Here the red-coloured sites must be closed while the states of the others are arbitrary.}
\label{figure:percolation-dependent}
\end{figure}

The rest of the proof is as usual (just replacing $q^U$ by $q^{ \frac{U}{d} }$):
the expected number of admissible $ \lambda $-paths from  $(z,0)$ to $(0,h)$
is upper bounded by 
\begin{align*}
  \sum_{ U-D = h , D \ge |z| } &\binom{U+D}{U}(2n)^D q^{ \frac{U}{d} } \le  \sum_{ U-D = h , D \ge |z| } 2^{U+D}(2n)^D q^{ \frac{U}{d} }\\
  &= \sum_{D \ge |z| }2^{2D+h}(2n)^D q^{ \frac{D+h}{d} }
  = 2^hq^{h/d}(8nq^{1/d})^{|z|}\frac 1{1-8nq^{1/d}},
\end{align*}
provided that $8nq^{1/d}<1$. In this case the last expression is summable over all $h >h_0$ and all $z \in \Z^n$ and the double sum converges to 0 as $h_0 \to \infty$,
so the claim of the proposition follows (with $p_0=1-\Big(\frac 1{8n}\Big)^d$).
\end{proof}

\subsection{Putting it together}
\label{subsect:putting-it-together}
%%%%%%%%%%%%%%%%%%%%%%%%%%%%%%%%%%%%%%%%%%%%%%%%%%%%%%%%%%%%%%%%%%%%%%%%%%%%%%%%%%%%%%%%%%%%%%%%%%%%%%%%%%%%%%%%%%%%%%%%%
Now let us state our problem in terms of Lipschitz percolation.
Our sites $ (i,j) \in \Z^{2} $ are boxes $ Q_{i,j} $. 
We impose three conditions on a box to be open:
\begin{itemize}
\item 
$ Q_{i,j} $ must contain the core of a positive obstacle.
Thus we get a condition on the heights and lengths. 
That means that $ 2 \rho < l,h $ and that the probability is
\[ \P(\mbox{$ Q_{i,j} $ contains the core of a positive obstacle}) = 1 - e^{ - \lambda^{+} ( l - 2 \rho )( h - 2 \rho ) }. \]
\item
Moreover, a core together with a strip of width $b$ around it must not intersect any negative obstacle. 
We need $ b < \frac{d}{2} $ in order for probabilities to be independent in the horizontal direction and, e.g., $ b < h $ to have limited dependence (more precisely, 2-independence) in the vertical direction. 
The probability then reads
\[ \P(\mbox{strip intersects no negative obstacles}) \ge e^{ - \lambda^{-} ( 2 b + 2 \rho + 2 \alpha \rho )^{2} }. \]
\item 
Lastly, we want the rectangle ``around'' a positive obstacle with length $ l + d $ and height $ 6 h $ (as in Figure~\ref{figure:percolation}) to contain less than $N$ centers of negative obstacles.
If we denote $ V := 6h(l+d) $, then
\begin{multline*}
\P( \mbox{rectangle intersects less than $N$ centers of negative obstacles} ) = \\
= e^{ - \lambda^{-} V } \sum_{k=0}^{N-1} \frac{ ( \lambda^{-} V )^{k} }{ k! } 
\ge e^{ - \lambda^{-} V } 
\left( e^{ \lambda^{-} V } - e^{ \lambda^{-} V } \frac{ ( \lambda^{-} V )^{N} }{ N! } \right)
= 1 - \frac{ ( \lambda^{-} V )^{N} }{ N! }
\end{multline*} 
%
%--------------ALTE VERSION ˇˇˇ------------------------------------------------------------------------------------------
\iffalse
The probability of having less than $N$ obstacles within a box with sides $ L $ and $ H $ is
%
\[ \P 
= e^{ - \lambda^{-} L H } \sum_{k=0}^{N-1} \frac{ ( \lambda^{-} L H )^{k} }{ k! } 
\ge e^{ - \lambda^{-} L H } 
\left( e^{ \lambda^{-} L H } - e^{ \lambda^{-} L H } \frac{ ( \lambda^{-} L H )^{N} }{ N! } \right)
= 1 - \frac{ ( \lambda^{-} L H )^{N} }{ N! }
\]
%
Therefore,
%
\[ \P( \mbox{rectangle intersects less than $N$ negative obstacles} ) \ge 1 - \frac{ ( 6 \lambda^{-} h ( l + d ) )^{N} }{ N! }. \]
\fi
%--------------ALTE VERSION ^^^------------------------------------------------------------------------------------------
%
(If we want the last two events to independent, we may exclude here the square with side $ 2 ( b + \rho ) $. The inequality, however, still holds.)
\end{itemize}

\begin{figure}[ht]
\begin{center}
\includegraphics[width=.5\textwidth]{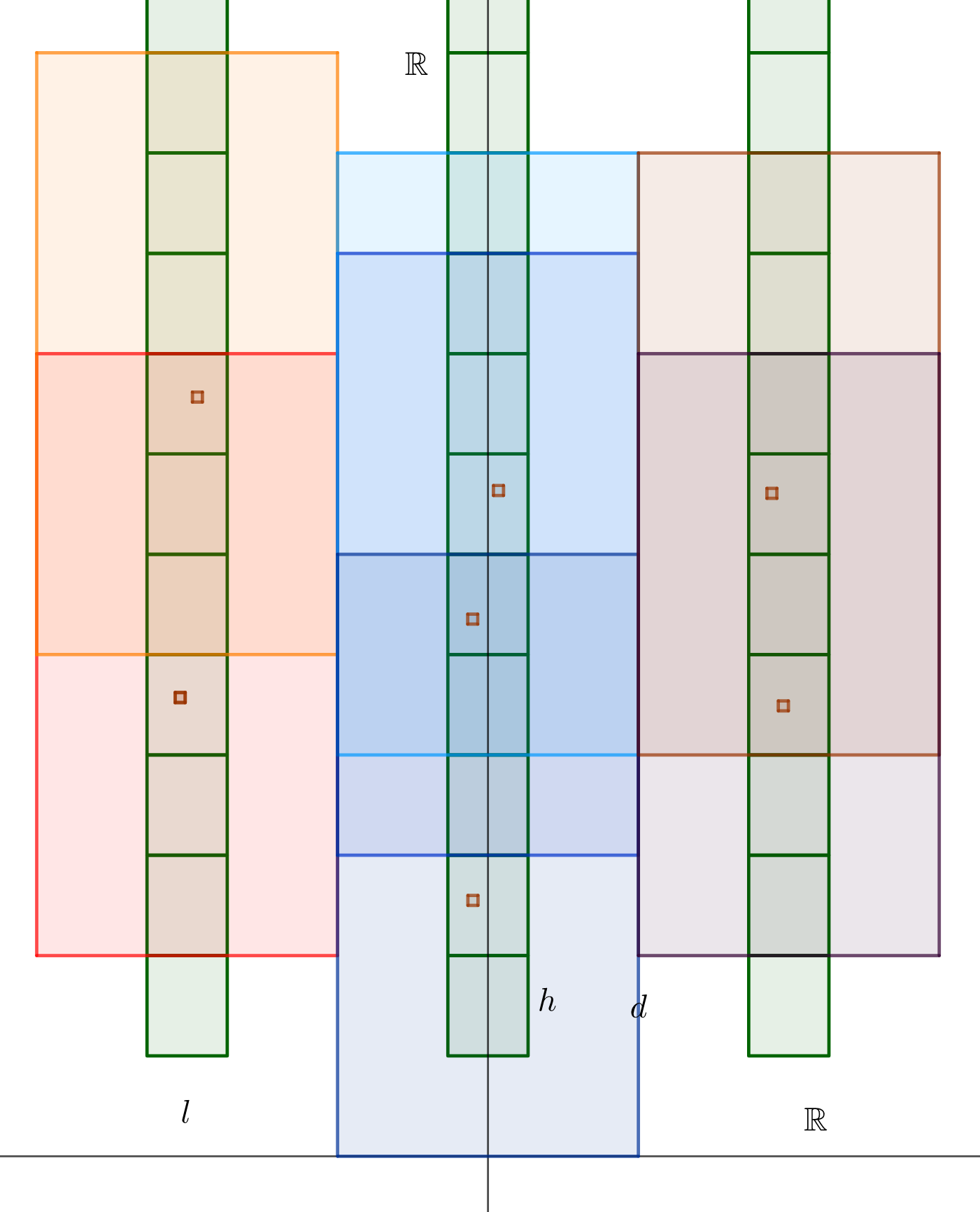}
\end{center}
\vspace{-5mm}
\caption{Boxes corresponding to percolation sites. A site $ (i,j) $ is open if $ Q_{i,j} $ contains the center of a positive obstacle, if the square of thickness $b$ around the core of this obstacle contains no center of negative obstacles, and if the larger rectangle of side-lengths $l+d$ and $6h$ contains less than $N$ centers of negative obstacles.
In this figure some of these larger rectangles are depicted in different colours, e.g., the red one belonging to $ (-1,4) $.}
\label{figure:percolation}
\end{figure}

Hence,
\[ \P(\mbox{$ Q_{i,j} $ is open}) \ge 
( 1 - e^{ - \lambda^{+} ( l - 2 \rho )( h - 2 \rho ) } ) e^{ - 4 \lambda^{-} ( b + ( 1 + \alpha ) \rho )^{2} }
\left( 1 - \frac{ ( 6 \lambda^{-} h ( l + d ) )^{N} }{ N! } \right). \]
We may surely employ Proposition~\ref{prop:percolation} if the right-hand side is bigger than $ p_{0} = p_{0}(1,6) $
since the sites are 6-independent.
The scales can be chosen in the following way:
\begin{enumerate}
\item
Let us suppose $ l , h \ge 4 \rho $.
Set $ d := l $ and $ h := \frac{k d}{4} $ (to obey \eqref{eq:1}), and choose $l$ so large that
\begin{equation}
\label{eq:a}
1 - e^{ - \lambda^{+} ( l - 2 \rho )( h - 2 \rho ) }
\ge 1 - e^{ - \lambda^{+} h l / 4 }  \ge p_{0}^{1/3}. 
\end{equation}
\item
Suppose $ ( 1 + \alpha ) \rho \le b. $
We choose $b$ small enough so that $ b < h $ (thus also $ b < \frac{d}{2} $) and 
\begin{equation}
\label{eq:b}
e^{ - 4 \lambda^{-} (2b)^{2} } \ge p_{0}^{1/3}.
\end{equation}
\item
Finally, choose $ N \in \N $ so that 
\begin{equation}
\label{eq:c}
 1 - \frac{ ( 6 \lambda^{-} h ( l + d ) )^{N} }{ N! } > p_{0}^{1/3}. 
\end{equation}
\end{enumerate}
The percolation result is now applicable, and we get a Lipschitz function between open sites.
Each of these open sites contains a positive obstacle,
and by \eqref{eq:2} and \eqref{eq:4}, 
we block by appropriately chosen parabola at least all
$ F \le \min \left\{ \frac{4h}{ (2l+d)^{2} } , 2 \frac{kd-2h}{ (2l+d)^{2} } \right\} $.
Making the choices as above, we arrive at
\begin{equation}
\label{eq:F}
F^{*} := \frac{1}{2} \min \left\{ \frac{4h}{ (2l+d)^{2} } , 2 \frac{kd-2h}{ (2l+d)^{2} } \right\} = \frac{k}{ 18 l }. 
\end{equation}
We choose the interval of admissible forces $ [ F^{*} , 2 F^{*} ] $. 
The corresponding parabolas cut on the side of the square a line of length
\[ \frac{ 2 F^{*} - F }{2} b (m-b) 
\ge \frac{F^{*}}{2} b (d-b) 
= \frac{ k b (l-b) }{ 36 l }. \]
Surely, if 
\begin{equation}
\label{eq:rho}
\frac{1}{2} \frac{ k b (l-b) }{ 36 l } \ge 4 N \alpha \rho,
\quad \mbox{and thus} \quad 
\rho \le \frac{ k b (l-b) }{ 288 \alpha N l },
\end{equation}
there is a parabola that does not intersect any negative obstacle
with center in the two rectangles of size $ (l+d) \times 6h $ belonging to the open sites. 
We see that the assumptions on $ \rho $ from (a) and (b) are automatically fulfilled.
Clearly, we also have $ \alpha \rho < h, d $, and thus this parabola also cannot intersect any negative obstacle with center outside the rectangles.

%\[ \frac{ \frac{ h d }{ 2 } + 2 \rho ( l + d - 2 \rho ) }{ \sqrt{ h^{2} + ( l + d - 2 \rho )^{2} } } \]
%%%%%%%%%%%%%%%%%%%%%%%%%%%%%%%%%%%%%%%%%%%%%%%%%%%%%%%%%%%%%%%%%%%%%%%%%%%%%%%%%%%%%%%%%%%%%%%%%%%%%%%%%%%%%%%%%%%%%%%%%
%%%%%%%%%%%%%%%%%%%%%%%%%%%%%%%%%%%%%%%%%%%%%%%%%%%%%%%%%%%%%%%%%%%%%%%%%%%%%%%%%%%%%%%%%%%%%%%%%%%%%%%%%%%%%%%%%%%%%%%%%
Thus, we can show
\begin{theo}
Let us suppose the following:
\begin{itemize}
\item 
Distribution: We have two independent Poisson point processes with parameters $ \lambda^{\pm} $ with $ ( x_{j}^{\pm} , y_{j}^{\pm} ) $, $ j \in \N $ being the corresponding positions of random points.
\item
Shape: A non-negative function $ \varphi \in C_{c}^{\i}( \R^{2} ) $ fulfils $ \varphi \ge 1 $ on $ [-1,1]^{2} $ and $ \supp \varphi \subset  $ for some $ \alpha > \sqrt{2} $.
\item
Strength: $ k \in (0,1] $.
\end{itemize}
Define for every $ \rho > 0 $:
\[ f_{\rho}(x,y, \omega ) 
:= \sum_{j} \frac{2k}{ \rho } \
\varphi \left( \frac{ x-x_{j}^{+}( \omega ) }{ \rho } , \frac{ y-y_{j}^{+}( \omega ) }{ \rho } \right). \]
Then there exist $ \rho^{*} > 0 $ and $ F^{*} > 0 $ such that a.s.~there exists a function $ v : \R \times \Omega \to (0,\i) $ that satisfies
\begin{equation}
\label{eq:condition}
v''(x) - f_{\rho^{*}}( x , v(x) ) + F^{*} \le 0
\end{equation} 
in the viscosity sense and $ d( ( x, v(x) ) , ( x_{j}^{-} , y_{j}^{-} ) ) > \alpha \rho^{*} $ for all $ j \in \N $.
\end{theo}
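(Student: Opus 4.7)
The plan is to collect the ingredients already prepared in the preceding subsections and stitch them together through the percolation result of Proposition~\ref{prop:percolation}. First I would fix the scales in the order indicated in Subsection~\ref{subsect:putting-it-together}: choose $l$ large (with $d=l$, $h=kd/4$) so that the probability that $Q_{i,j}$ contains a positive core exceeds $p_0^{1/3}$ as in \eqref{eq:a}, then pick $b\in[(1+\alpha)\rho,h)$ satisfying \eqref{eq:b}, and finally fix $N\in\N$ so that \eqref{eq:c} holds. A site $(i,j)\in\Z^2$ is declared open when (i) the core of some positive obstacle lies inside $Q_{i,j}$, (ii) the $b$-neighbourhood of this core contains no centre of a negative obstacle, and (iii) the enlarged rectangle of size $(l+d)\times 6h$ centred vertically on $Q_{i,j}$ contains fewer than $N$ centres of negative obstacles. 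By the estimates displayed in Subsection~\ref{subsect:putting-it-together}, the probability of a site being open is at least $p_0^{1/3}\cdot p_0^{1/3}\cdot p_0^{1/3}=p_0$.

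Next I would verify the dependence structure. Because of the condition $b<d/2$, horizontally disjoint columns look at disjoint strips of the Poisson processes, so the events in different columns are independent. In a fixed column, the three defining events for $(i,j)$ and $(i,j')$ involve only the rectangles of vertical extent at most $6h$, so if $|j-j'|\ge 6$ these events depend on disjoint regions and are independent. Hence the field $\xi(i,j):=\mathbf{1}\{(i,j)\text{ open}\}$ is $6$-independent in the sense of Subsection~\ref{subsect:percolation}, and Proposition~\ref{prop:percolation} with $n=1$, $d=6$ applies: almost surely there exists a Lipschitz function $\phi\colon\Z\to\N$ with $\xi(i,\phi(i))=1$ for every $i\in\Z$ and $|\phi(i+1)-\phi(i)|\le 1$.

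Once such a $\phi$ is fixed, I would construct $v$ by concatenating two families of building blocks. Inside the core of each selected positive obstacle (the one promised by openness in $Q_{i,\phi(i)}$), use the parabola of Subsection~\ref{subsect:inside-a-core} with second derivative $S/2$ passing through the two upper corners of the core; its exiting slopes have modulus at most $k\le 1$, and $v''\le S-F$ is satisfied for every $F\le S/2$. Between the cores of the selected obstacles in $Q_{i,\phi(i)}$ and $Q_{i+1,\phi(i+1)}$, connect them by the downward parabola
\[
v(x)=\Bigl(\tfrac{F}{2}m+\tfrac{n}{m}\Bigr)x-\tfrac{F}{2}x^2
\]
of Subsection 3.4, now relative to the coordinates of the two exit corners; since $|\phi(i+1)-\phi(i)|\le 1$ we have $m\in[d,2l+d)$ and $|n|<2h$, and the choice of $F^*$ in \eqref{eq:F} is exactly what makes \eqref{eq:1}, \eqref{eq:2}, \eqref{eq:3}, and \eqref{eq:4} simultaneously valid. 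At the junctions the exiting slope of each parabola is bounded by $k$ in absolute value, so the slope jump goes in the direction compatible with a viscosity supersolution. The initial height $h$ together with $\phi\ge 1$ guarantees $v>0$ on $\R$.

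The main obstacle is to ensure that the connecting parabolas actually avoid every negative obstacle, which forces the smallness assumption on $\rho$. Here I would rely on the length analysis in Subsection~3.5: within a selected open pair, the $b$-strip around each positive core is free of negative centres by condition (ii), and the enlarged $(l+d)\times 6h$ rectangles contain fewer than $N$ negative centres by condition (iii). For a given pair of boundary values $(m,n)$ the admissible parabolas form a one-parameter family indexed by $F\in[F^*,2F^*]$, and the length of the arc on the critical line produced by this family is at least $F^*b(m-b)/2\ge kb(l-b)/(36l)$, whereas each negative centre blocks at most $4\alpha\rho$ of this arc. Choosing $\rho\le\rho^*$ as in \eqref{eq:rho} ensures that the blocked portion occupies less than half of the available length, so an unobstructed $F\in[F^*,2F^*]$ can be selected; condition (iii) plus $\alpha\rho<h,d$ rules out interference from negative obstacles whose centres lie outside the controlled rectangles. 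Performing this selection independently on every pair $(i,i+1)$ (with an arbitrary measurable tie-breaking rule), one obtains $v$ defined on all of $\R$ that is piecewise quadratic, globally positive, dominates zero at $t=0$, satisfies \eqref{eq:condition} pointwise on each piece, and meets the viscosity condition at the junctions, establishing the theorem.
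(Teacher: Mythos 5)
Your proposal is correct and follows essentially the same route as the paper's own proof: fix the scales via \eqref{eq:a}--\eqref{eq:c}, apply Proposition~\ref{prop:percolation} with $n=1$, $d=6$ to the $6$-independent field of open boxes, and glue the core parabolas ($v''=S/2$) to connecting parabolas with $F\in[F^*,2F^*]$ chosen, via the length estimate and \eqref{eq:rho}, to avoid all negative obstacles, the kinks being harmless because the one-sided slopes are bounded by $k$. The only points you leave implicit---that the smallness of $\rho$ imposed by \eqref{eq:rho} automatically makes the earlier assumptions $l,h\ge 4\rho$ and $(1+\alpha)\rho\le b$ consistent, and that $2F^*\le S/2$ so the in-core construction applies---are precisely the remarks the paper itself adds, so nothing essential is missing.
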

\begin{proof}
We choose the scales $ l , d , h , b , N $ as described in \eqref{eq:a}--\eqref{eq:c}. 
Thus we obtain Lipschitz percolating boxes $ \{ Q_{i,j(i)} \}_{ i \in \Z } $.
Then we define 
\[ F^{*} := \frac{k}{18 l} 
\quad \mbox{and} \quad 
\rho^{*} := \frac{ k b ( l - b ) }{ 144 \alpha N l } \]
according to \eqref{eq:F} and \eqref{eq:rho}.
Between two obstacles from adjacent boxes, for some $ F \in [ F^{*} , 2 F^{*}] $
we may a.s.~find a parabola $ v'' + F = 0 $ that does not intersect any negative obstacle.
($F$ need not be the same for different pairs.)

We define the supersolution $v$ in a piecewise manner. 
Between the cores from the Lipschitz percolating boxes, we take the parabolas from above.
Inside the cores, we take parabolas with $ v'' = \frac{S}{2} = \frac{k}{\rho} $ as described in Subsection~\ref{subsect:inside-a-core}.
We notice that the assumption $ 2 F^{*} \le \frac{k}{\rho} $ made there is fulfilled. 
Moreover, the cores also do not intersect any negative obstacle.
At the edges of core, $v$ may be non-differentiable. 
However, due to \eqref{eq:viscosity}, $v$ suffices the inequality \eqref{eq:condition} in these points in the viscosity sense.
\end{proof}
\begin{remark}
Clearly, also for every $ \rho \le \rho^{*} $ and $ F \le F^{*} $ we may a.s.~find such a function.
\end{remark}

\bibliographystyle{abbrv}
\bibliography{refs}

\iffalse

{\scshape Patrick W. Dondl} \\
{\footnotesize
{Abteilung f{\"u}r Angewandte Mathematik}
{Albert-Ludwigs-Universit{\"a}t Freiburg}
{Hermann-Herder-Str.~10}
{79104 Freiburg, Germany} } \\
{\tt patrick.dondl@mathematik.uni-freiburg.de}

\medskip

{\scshape Martin Jesenko} \\
{\footnotesize
{Abteilung f{\"u}r Angewandte Mathematik}
{Albert-Ludwigs-Universit{\"a}t Freiburg}
{Hermann-Herder-Str.~10}
{79104 Freiburg, Germany} }
\\
{\tt martin.jesenko@mathematik.uni-freiburg.de}

Institut f\"{u}r Mathematik, 
MA 7--5, 
Fakult\"{a}t II,
Technische Universit\"{a}t Berlin, 
Stra{\ss}e des 17.~Juni 136
,10623 Berlin, 
Germany
ms@math.tu-berlin.de

\fi

\end{document}